\newcommand{\R}{\mathbb{R}}
\newcommand{\N}{\mathbb{N}}
\newtheorem{theorem}{Theorem}
\newtheorem{lemma}[theorem]{Lemma}
\newtheorem{definition}{Definition}
\newtheorem{remark} {Remark}
\newcommand{\Om}{\Omega}
\newcommand{\lb}{\lambda}
\newcommand{\sm}{\setminus}
\newcommand{\sq}{\subseteq}
\newcommand{\ov}{\overline}
\newcommand{\vps}{\varepsilon}
\newcommand{\bp}{\begin{proof}}
\newcommand{\ep}{\end{proof}}
\begin{document}
\title{Sign changing solutions of Poisson's equation}

\author{{M. van den Berg} \\
School of Mathematics, University of Bristol\\
Fry Building, Woodland Road\\
Bristol BS8 1UG\\
United Kingdom\\
\texttt{mamvdb@bristol.ac.uk}\\
\\
{D. Bucur}\\
Laboratoire de Math\'ematiques, Universit\'e Savoie Mont Blanc \\
UMR CNRS  5127\\
Campus Scientifique,
73376 Le-Bourget-Du-Lac\\
France\\
\texttt{dorin.bucur@univ-savoie.fr}}
\date{20 January 2020}\maketitle
\vskip 1.5truecm \indent
\begin{abstract}\noindent
Let $\Omega$ be an open, possibly unbounded, set in Euclidean space $\R^m$   with boundary $\partial\Omega,$   let $A$ be a measurable subset of $\Omega$  with measure $|A|$, and let $\gamma \in (0,1)$.
We investigate whether the solution $v_{\Om,A,\gamma}$ of $-\Delta v=\gamma{\bf 1}_{\Omega \setminus  A}-(1-\gamma){\bf 1}_{A}$ with    $v=0$ on $\partial \Omega$  changes sign.
Bounds are obtained for $|A|$ in terms of geometric characteristics of $\Om$ (bottom of the spectrum of the Dirichlet Laplacian, torsion, measure, or $R$-smoothness of the boundary) such that ${\rm essinf} v_{\Om,A,\gamma}\ge 0$. We show that ${\rm essinf} v_{\Om,A,\gamma}<0$ for any measurable set $A$, provided $|A| >\gamma |\Om|$. This value is sharp. We also study the shape optimisation problem of the optimal location of $A$ (with prescribed measure) which minimises the essential infimum of $v_{\Om,A,\gamma}$. Surprisingly, if $\Om$ is a ball, a symmetry breaking phenomenon occurs.
\end{abstract}
\vskip 1.5truecm \noindent \ \ \ \ \ \ \ \  { Mathematics Subject
Classification (2000)}: 35J25, 35J99,  35K20.
\begin{center} \textbf{Keywords}: Torsion function, Dirichlet boundary condition, Poisson's equation.
\end{center}
\mbox{}
\section{Introduction\label{sec0}}
Let $\Omega$ be an open, possibly unbounded, set in Euclidean space $\R^m$ with boundary $\partial\Omega$, and with, possibly infinite, measure $|\Omega|$.
It is well-known \cite {vdBC} that if the bottom of the Dirichlet Laplacian defined by
\begin{equation*}
\lambda(\Omega)=\inf_{\varphi\in H_0^1(\Omega)\setminus\{0\}}\frac{\displaystyle\int_\Omega|D\varphi|^2}{\displaystyle\int_\Omega \varphi^2},
\end{equation*}
is bounded away from $0$, then
\begin{equation}\label{e1}
-\Delta v=1,\, v=0 \mbox { on } \partial \Omega,
\end{equation}
has a unique weak solution denoted by $v_{\Omega}$, which is non-negative, and which satisfies,
\begin{equation}\label{ee}
\lambda(\Omega)^{-1}\le \|v_{\Omega}\|_{ L^{\infty}(\Omega)}\le (4+3m\log 2)\lambda(\Omega)^{-1}.
\end{equation}
The $m$-dependent constant in the right-hand side of \eqref{ee} has been improved in \cite{GS}, and subsequently in \cite{HV}.

If $|\Om|<\infty$ then, by the Faber-Krahn inequality, $\lambda(\Om)>0$, and by \eqref{ee},  $v_{\Om}\in  H^1_0(\Omega)$,  and   $v_{\Om}\in L^1(\Om)$. For an arbitrary open set $\Om$ we define the torsion, or torsional rigidity, by
\begin{equation*}
T(\Om)=\int_{\Om}v_{\Om}.
\end{equation*}

Note that, under the assumption $\lambda(\Omega)>0$, by \eqref{ee} the solution of  an equation like in \eqref{e1} with a right-hand side $f\in L^\infty (\Omega)$ can be defined by approximation on balls for the positive and negative parts of $f$.

For a measurable subset $A\subset\Omega$, with $\lambda(\Om)>0$, and $0<\gamma<1$, we denote by $v_{\Omega,A,\gamma}$ the solution of
\begin{equation}\label{e2}
-\Delta v=\gamma {\bf 1}_{\Omega\setminus A}-(1-\gamma){\bf 1}_{A},\, v=0 \mbox { on } \partial \Omega.
\end{equation}
 These hypotheses on $A$, $\Om$ and $\gamma$ will not be repeated in the statements of all lemmas and theorems below.

This paper investigates whether the solution of \eqref{e2} satisfies ${\rm essinf} \; v_{\Om,A,\gamma}<0.$ Whether this holds depends on the geometry of $\Om$, and on the size and the location of the set $A\subset \Om$. This question shows up in a variety of situations. We refer, for instance,  to \cite{HKS}, where $v$ is a scalar potential and the right-hand side stands for a magnetic field which changes sign. The influence of the magnetic field  on the asymptotic behaviour of the bottom of the spectrum  of the Pauli operator is effective provided that the scalar potential has constant sign, that is, ${\rm essinf} \; v_{\Om,A,\gamma}=0$. In fluid mechanics, the function $v$ can be interpreted as a vorticity stream function, for a vorticity taking the values $\gamma$ and $-(1-\gamma)$. If $v_{\Om,A,\gamma}$ changes sign then there exist at least two stagnation points. More situations where the sign question of the state function is put in relationship with sign changing data can be found in  \cite{BBV}, \cite{FH}, \cite{McG} and, in some biological models, \cite{LLNP16}.

\begin{definition}\label{def1} For $\gamma \in (0,1)$, $\Om \subset \R^m$, with $\lambda (\Om) >0$,
\begin{equation*}
\mathfrak{C}_{-}(\Omega,\gamma)=\sup\{c\ge 0: \forall\, A\subset \Omega, A\, \textup {measurable}, |A|\le c, {\rm essinf} \; v_{\Omega,A,\gamma}\ge 0\},
\end{equation*}
\begin{equation*}
 \mathfrak{C}_{+}(\Omega,\gamma)=\inf\{c\ge 0: \forall\, A\subset \Omega, A\, \textup {measurable}, |A|> c, {\rm essinf} \; v_{\Om,A,\gamma}<0\}.
\end{equation*}
\end{definition}
It follows immediately from the definition that for a homothety $t\Om,\, t>0$ of $\Om$ we have the scaling relations
\begin{equation}\label{ee9}
\mathfrak{C}_{-}(t\Omega,\gamma)=t^m\mathfrak{C}_{-}(\Omega,\gamma),
\end{equation}
and
\begin{equation*}
\mathfrak{C}_{+}(t\Omega,\gamma)=t^m\mathfrak{C}_{+}(\Omega,\gamma).
\end{equation*}
Furthermore if $\Om_1, \Om_2$ are disjoint open sets, then
$$\mathfrak{C}_{-}(\Omega_1\cup \Om_2,\gamma)=\min \{ \mathfrak{C}_{-}(\Omega_1 ,\gamma), \mathfrak{C}_{-}(\Omega_2 ,\gamma)\},$$
$$\mathfrak{C}_{+}(\Omega_1\cup \Om_2,\gamma)=\mathfrak{C}_{+}(\Omega_1 ,\gamma) + \mathfrak{C}_{+}(\Omega_2 ,\gamma).$$

This paper concerns the analysis of these quantities and their dependence on  $\Om$. It turns out that $ \mathfrak{C}_{+}(\Omega,\gamma)=\gamma |\Om|$ for arbitrary open sets $\Om$ with finite measure. On the contrary, $\mathfrak{C}_{-}(\Omega,\gamma)$ is very sensitive to the geometry. We find its main properties, give basic estimates, establish isoperimetric and isotorsional inequalities, and we discuss the shape optimisation problem related to the optimal location of the set $A$ in order to minimise the essential infimum.
\begin{theorem}\label{the1}
{For every non-empty open set $\Om \sq \R^m$ of finite measure we have}
$$ \mathfrak{C}_{+}(\Omega,\gamma) = \gamma|\Om|.$$
\end{theorem}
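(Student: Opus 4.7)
The identity $\mathfrak{C}_+(\Omega,\gamma)=\gamma|\Omega|$ breaks into two inequalities. The upper bound $\mathfrak{C}_+(\Omega,\gamma)\le\gamma|\Omega|$ asserts that every measurable $A$ with $|A|>\gamma|\Omega|$ forces ${\rm essinf}\,v_{\Omega,A,\gamma}<0$, while the lower bound $\mathfrak{C}_+(\Omega,\gamma)\ge\gamma|\Omega|$ requires, for each $c<\gamma|\Omega|$, a measurable $A$ with $|A|>c$ and ${\rm essinf}\,v_{\Omega,A,\gamma}\ge 0$. My plan is to establish the first by a single global test-function identity, and the second by a spreading construction in which $A$ is concentrated in a compact subset at strictly positive distance from $\partial\Omega$, so that the maximum principle propagates interior positivity up to the boundary.

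For the upper bound, set $v:=v_{\Omega,A,\gamma}\in H_0^1(\Omega)$ and $f:=\gamma\mathbf{1}_{\Omega\sm A}-(1-\gamma)\mathbf{1}_A$, and argue contrapositively: assume $v\ge 0$ a.e.\ and deduce $|A|\le\gamma|\Omega|$. The Lipschitz truncation $\varphi_\vps:=v/(v+\vps)$ belongs to $H_0^1(\Omega)$ with $\nabla\varphi_\vps=\vps(v+\vps)^{-2}\nabla v$; inserting it into the weak formulation yields
\begin{equation*}
\int_\Omega\frac{\vps\,|\nabla v|^2}{(v+\vps)^2}\,dx=\int_\Omega f\,\frac{v}{v+\vps}\,dx.
\end{equation*}
The left-hand side is non-negative for every $\vps>0$. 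On the right, $v/(v+\vps)\to\mathbf{1}_{\{v>0\}}$ pointwise with $|v/(v+\vps)|\le 1$, so dominated convergence yields $\int_\Omega f\,\mathbf{1}_{\{v>0\}}$ in the limit $\vps\downarrow 0$. The key observation is $|\{v=0\}|=0$: since $f\in L^\infty$, Calder\'on--Zygmund places $v$ in $W^{2,p}_{\rm loc}(\Omega)$ for every $p<\infty$, so Stampacchia's theorem gives $D^2 v=0$ a.e.\ on $\{v=0\}$ and hence $f=-\Delta v=0$ a.e.\ there; but $|f|\ge\min(\gamma,1-\gamma)>0$ throughout $\Omega$, forcing $|\{v=0\}|=0$. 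The limit of the right-hand side therefore equals $\int_\Omega f=\gamma|\Omega|-|A|$, and non-negativity of the left-hand side gives $|A|\le\gamma|\Omega|$.

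For the lower bound, fix $c<\gamma|\Omega|$, choose $\eta\in(0,\gamma-c/|\Omega|)$, and pick a bounded open $K\sq\Omega$ together with $\delta>0$ small enough that $K_\delta:=K\cap\{x\in\Omega:{\rm dist}(x,\partial\Omega)>\delta\}$ satisfies $\beta:=(\gamma-\eta)|\Omega|/|K_\delta|<\gamma$. I construct $A_n\sq K_\delta$ with $|A_n|=(\gamma-\eta)|\Omega|$ and $\mathbf{1}_{A_n}\rightharpoonup\beta\mathbf{1}_{K_\delta}$ weakly in $L^2$, for instance by taking each $A_n$ to be a union of centred sub-cubes of volume fraction $\beta$ in a dyadic partition of $K_\delta$ at scale $2^{-n}$. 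The right-hand sides $f_n:=\gamma-\mathbf{1}_{A_n}$ are uniformly bounded in $L^\infty$, so interior $W^{2,p}_{\rm loc}$-compactness gives $v_n:=v_{\Omega,A_n,\gamma}\to v^\ast$ locally uniformly in $\Omega$, where $v^\ast\in H_0^1(\Omega)$ solves $-\Delta v^\ast=\gamma-\beta\mathbf{1}_{K_\delta}\ge 0$ with right-hand side not identically zero. The strong maximum principle then gives $v^\ast>0$ in each connected component of $\Omega$, and continuity on the compact set $\overline{K_{\delta/2}}\sq\Omega$ produces a uniform lower bound $v^\ast\ge c_\ast>0$ there. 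For $n$ large, $v_n\ge c_\ast/2$ on $K_{\delta/2}$; on the complement $\Omega\sm K_{\delta/2}$ the set $A_n$ is empty, so $-\Delta v_n=\gamma\ge 0$ there, and combined with $v_n=0$ on $\partial\Omega$ in the $H_0^1$-trace sense and $v_n\ge c_\ast/2$ on $\partial K_{\delta/2}$, the weak maximum principle (tested against $(v_n)_-\in H_0^1$) gives $v_n\ge 0$ on $\Omega\sm K_{\delta/2}$. Combining, ${\rm essinf}\,v_n\ge 0$ and $|A_n|=(\gamma-\eta)|\Omega|>c$, yielding $\mathfrak{C}_+(\Omega,\gamma)\ge c$ for every $c<\gamma|\Omega|$.

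The main obstacle lies in the lower bound: one needs the locally uniform convergence $v_n\to v^\ast$ and the resulting uniform interior lower bound $c_\ast>0$ to coexist with a positivity propagation argument on $\Omega\sm K_{\delta/2}$ that does not require any boundary regularity of $\Omega$, which may be irregular or even unbounded. The bounded buffer $K$ is introduced precisely for this reason: it guarantees that all uniform estimates live on compact subsets at strictly positive distance from $\partial\Omega$, so that the passage to ${\rm essinf}\,v_n\ge 0$ on all of $\Omega$ reduces to a purely $H_0^1$-type weak maximum principle on the strip complement.
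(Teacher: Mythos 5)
Your proof is correct but takes a genuinely different route from the paper for both halves of the identity.

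For the upper bound $\mathfrak{C}_{+}(\Omega,\gamma)\le\gamma|\Omega|$, the paper applies Talenti's rearrangement theorem, comparing $v_{\Omega,A,\gamma}\ge 0$ with the explicit radial solution on a ball and reading the constraint off the sign of the radial derivative at the outer radius. You instead test the weak formulation with the Lipschitz truncation $v/(v+\vps)$, whose monotone limit is ${\bf 1}_{\{v>0\}}$, and use interior $W^{2,p}_{\rm loc}$ regularity plus Stampacchia's level-set lemma to show $|\{v=0\}|=0$, so the non-negative left side forces $\int_\Om f=\gamma|\Om|-|A|\ge 0$. Your route avoids rearrangement entirely and is a direct energy estimate, but it needs the Calder\'on--Zygmund step to neutralize the zero level set; the paper's route requires no regularity beyond $H^1_0$ and as a byproduct identifies the ball as the sharp case. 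For the lower bound $\mathfrak{C}_{+}(\Omega,\gamma)\ge\gamma|\Omega|$, the paper packs $\Omega$ by countably many disjoint interior balls, places in each an explicit annulus of measure $\gamma|B_i|$ (so that each $v_{B_i,A_i,\gamma}\ge 0$), and invokes a monotonicity-under-domain-enlargement lemma (Lemma~\ref{lem:ext}) to pass from the union of balls to $\Omega$. You instead homogenize: you disperse $A_n$ inside a compactly embedded $K_\delta$ of near-full measure so that ${\bf 1}_{A_n}\rightharpoonup\beta{\bf 1}_{K_\delta}$ with $\beta<\gamma$, identify the limit $v^*$ as solving a problem with strictly positive source (hence $v^*>0$ throughout $\Omega$), and transfer positivity back to $v_n$ by locally uniform convergence together with a weak maximum principle tested against $(v_n)_-$, using $f_n=\gamma\ge 0$ outside $K_\delta$. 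Both constructions are sound; the paper's gives explicit competitors $A$, yours is asymptotic but illustrates robustly that concentrating $A$ away from $\partial\Omega$ at a sub-critical density is what matters. One small presentational point: your concluding maximum-principle step works directly on all of $\Omega$ once $f_n\ge 0$ holds on $\{v_n<0\}$, so the restriction to $\Omega\sm K_{\delta/2}$ and the boundary-trace framing are not actually needed.
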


Below we show that, in general, we have to assume some regularity of $\Om$ in order to have $\mathfrak{C}_{-}(\Omega,\gamma)>0$.
For instance, if $\Omega=\cup_{j\in \N}C_j$ is a set of finite measure, where the sets $C_j,j\in \N$ are non-empty, open, disjoint, then $\mathfrak{C}_{-}(\Omega,\gamma)=0$. Indeed, if we let  $A=C_j$ then  ${\rm essinf}\;v_{\Om,A,\gamma}\le (\gamma-1){\rm   esssup} \;v_{C_j}<0.$ Consequently,
$\mathfrak{C}_{-}(\Omega,\gamma) \le |C_j|$ for every $j$, so $\mathfrak{C}_{-}(\Omega,\gamma)=0.$

\begin{theorem}\label{the2} If  $\Omega\subset \R^2$ is any {open triangle,} then
 $\mathfrak{C}_{-}(\Omega,\gamma)=0$.
 \end{theorem}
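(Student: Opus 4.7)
It suffices, for each $c>0$, to exhibit a measurable $A\sq\Om$ with $|A|\le c$ and ${\rm essinf}\; v_{\Om,A,\gamma}<0$. Every triangle has at least one vertex $V$ whose interior angle $\alpha$ satisfies $\alpha\le\pi/3<\pi/2$. After a rigid motion, place $V$ at the origin with the two incident edges on the rays $\theta=0$ and $\theta=\alpha$. Convexity gives $\Om\sq W_\alpha:=\{(r\cos\theta,r\sin\theta):r>0,\,0<\theta<\alpha\}$, and for $R$ smaller than the altitude from $V$, the circular sector $S_R:=W_\alpha\cap B(0,R)$ lies in $\Om$. I take $A=S_R$, of area $\alpha R^2/2$, which can be made arbitrarily small.

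Two matching comparisons drive the argument. On the wedge I use the explicit supersolution $\psi(r,\theta)=r^2 g(\theta)$, where $g$ solves $g''+4g=-1$ with $g(0)=g(\alpha)=0$, giving $g(\theta)=\tfrac14(\cos 2\theta+\tan\alpha\sin 2\theta-1)$. The hypothesis $\alpha<\pi/2$ is exactly what makes $g\ge 0$ on $[0,\alpha]$, so $\psi\ge 0=v_{\Om}$ on $\partial\Om$; the maximum principle (applied on $\Om\sq W_\alpha$) then yields $v_{\Om}(r,\theta)\le r^2 g(\theta)$ in $\Om$. On the other side, write $v_{\Om,A,\gamma}=\gamma v_{\Om}-u$, where $-\Delta u=\mathbf{1}_A$ in $\Om$ with $u=0$ on $\partial\Om$; a maximum-principle comparison gives $u\ge v_{S_R}$ on $S_R$ (since $u\ge 0\equiv v_{S_R}$ on $\partial S_R$ and $u-v_{S_R}$ is harmonic on $S_R$). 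Decomposing $v_{S_R}=\psi-\phi_R$, the harmonic remainder $\phi_R$ vanishes on the two radii of $S_R$ and equals $R^2 g(\theta)$ on the arc $r=R$.

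By separation of variables, $\phi_R(r,\theta)=\sum_{n\ge 1} b_n (r/R)^{n\pi/\alpha}\sin(n\pi\theta/\alpha)$; since $\pi/\alpha>2$ (equivalent to $\alpha<\pi/2$), every mode decays strictly faster than $r^2$ at the vertex, and a standard bound on the Fourier coefficients yields $\phi_R(r_0,\alpha/2)=o(r_0^2)$ as $r_0\to 0$ with $R$ fixed. Evaluating at $x_0=(r_0\cos(\alpha/2),r_0\sin(\alpha/2))$ with $r_0\in(0,R)$,
\[
v_{\Om,A,\gamma}(x_0)\le \gamma r_0^2 g(\alpha/2)-v_{S_R}(x_0)=-(1-\gamma)\,r_0^2\,g(\alpha/2)+\phi_R(r_0,\alpha/2),
\]
and since $g(\alpha/2)>0$ and $\gamma<1$, this is strictly negative for $r_0$ small enough. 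Continuity of $v_{\Om,A,\gamma}$ then forces ${\rm essinf}\; v_{\Om,A,\gamma}<0$; as $|A|=\alpha R^2/2$ is arbitrary, $\mathfrak{C}_{-}(\Om,\gamma)=0$.

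The single technical point is the quantitative decay $\phi_R(r_0,\alpha/2)=o(r_0^2)$; everything else is routine. Conceptually, both inequalities $v_{\Om}\le\psi$ and $u\ge v_{S_R}$ become asymptotically sharp at the vertex, so the ratio $u(x_0)/(\gamma v_{\Om}(x_0))$ tends to $1/\gamma>1$ there, forcing the sign change. The angle bound $\alpha<\pi/2$ is the sole place where the triangle hypothesis enters, and it is exactly what makes the harmonic correction negligible against the leading $r^2$ behaviour.
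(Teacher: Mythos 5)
Your proof is correct and follows essentially the same route as the paper's: both isolate a vertex with angle $\alpha<\pi/2$, bound $v_{\Om,A,\gamma}$ along the bisector above by $\gamma v_{W_\alpha}-v_{S_R}$ using monotonicity/maximum principle, and conclude from the fact that the harmonic correction to the sector's torsion function decays like $r^{\pi/\alpha}=o(r^2)$ at the vertex. The only difference is cosmetic: the paper quotes the explicit Timoshenko--Goodier series for the sector and uses the alternating-series property of its terms, whereas you re-derive the wedge solution $r^2g(\theta)$ and the Fourier decay of the remainder $\phi_R$ from first principles, making your version somewhat more self-contained.
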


In Theorem \ref{the3} below we show that if $\Om$ is bounded, and $\partial\Om$ is of class $C^2$ then $\mathfrak{C}_{-}(\Om)>0$. In order to quantify this assertion we
introduce some notation.  For a non-empty open set $\Omega$ we denote by $\textup{diam}(\Omega)=\sup\{|x-y|:\, x\in \Omega,\, y\in \Omega\}$.  We denote the complement $\R^m \sm E$ of $E$ by $E^c$, and the closure of $E$ by $\overline{E}$.
Furthermore, $B_r(x):=\{y\in \R^m: |x-y|<r\}$ denotes the open ball centred at $x$ of radius $r$. If $x=0$, we simply write $B_r$. We set $\omega_m=|B_1|$. For $x\in \Om$ we let $\bar{x}\in \partial \Om$ be a point such that $|x-\bar x|=\min\{|x-z|:z\in \partial\Omega\}$. We recall the following from \cite[p.280]{vdB}.
\begin{definition}\label{def2}
An open set $\Omega \subset \mathbb{R}^{m}$, $m\geq 2$, has $R$-smooth
boundary if at any point $x_{0} \in \partial \Omega$, there are two
open balls $B_R(x_1), \, B_{R}(x_2)$ such that $B_R(x_1) \subset \Omega$, $B_R(x_2)
\subset \mathbb{R}^{m}\sm \bar{\Omega}$ and $\bar B_R(x_1)  \cap
\bar B_R(x_2)=\{x_{0}\}$.
\end{definition}
We also recall that a bounded $\Om$ with $C^2$ boundary $\partial\Om$ is $R$-smooth for some $R>0$.
\begin{theorem}\label{the3}
If $\Om$ is an open, bounded set in $\R^m$ with a $C^2$ and $R$-smooth boundary, then
\begin{equation*}
\mathfrak{C}_{-}(\Om,\gamma)\ge\mathfrak{C}_{-}(B_R,\gamma).
\end{equation*}
Furthermore
\begin{equation}\label{e00}
\mathfrak{C}_{-}(B_R,\gamma)\ge \bigg(\frac{\gamma}{4m}\bigg)^m\omega_mR^m,\, m=2,3,...,
\end{equation}
\begin{equation}\label{ex1}
\mathfrak{C}_{-}(B_R,\gamma)\le \gamma^{m/2}\omega_mR^m, \, m\ge 3,
\end{equation}
and
\begin{equation}\label{ex2}
\mathfrak{C}_{-}(B_R,\gamma)\le \Big(1+\log \Big(\frac{1}{\gamma}\Big)\Big)^{-1}\gamma\pi R^2,\, m=2.
\end{equation}
\end{theorem}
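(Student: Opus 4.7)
For the first inequality $\mathfrak{C}_{-}(\Om,\gamma) \geq \mathfrak{C}_{-}(B_R,\gamma)$, I would argue by contradiction using a Brownian-motion decomposition. Suppose $v_{\Om,A,\gamma}$ attains a strictly negative minimum $m^*<0$ at some $x_0 \in \Om$. Since $-\Delta v_{\Om,A,\gamma} = \gamma > 0$ on $\Om\setminus A$, the strong maximum principle forces any such minimizer into $\overline A$. The $R$-smoothness provides a ball $B\subset\Om$ of radius $R$ containing $x_0$: an inner tangent ball at the closest boundary point $\bar x_0$ when $d(x_0,\partial\Om)\leq R$, or $B=B_R(x_0)$ otherwise. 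Decomposing Brownian motion at the exit time $\tau_B$ yields the strong-Markov identity
\begin{equation*}
v_{\Om,A,\gamma}(x_0) \;=\; v_{B,A\cap B,\gamma}(x_0) \;+\; \E_{x_0}\bigl[v_{\Om,A,\gamma}(B_{\tau_B})\bigr].
\end{equation*}
By \eqref{ee9} and $|A\cap B|\leq|A|\leq\mathfrak{C}_{-}(B_R,\gamma)$ the first summand is $\geq 0$, while $v_{\Om,A,\gamma}(B_{\tau_B})\geq m^*$ pointwise; equality in the identity forces $v_{\Om,A,\gamma}\equiv m^*$ on all of $\partial B$ (the support of harmonic measure from $x_0$). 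In the tangent-ball case $\bar x_0\in\partial B\cap\partial\Om$ where $v=0$, contradicting $m^*<0$. In the interior case, picking a point of maximal modulus in the closed minimum set produces $\partial B_R$-points of strictly larger modulus, again a contradiction.

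For the ball lower bound, decompose $v_{B_R,A,\gamma}(x)=\gamma v_{B_R}(x)-w_A(x)$ with $w_A(x)=\int_A G_{B_R}(x,y)\,dy$. The maximum-principle argument above, applied to $B_R$ itself, shows any negative minimum lies in $\overline A$, so it suffices to verify $w_A(x)\leq\gamma v_{B_R}(x)$ on $\overline A$. Writing $y^*=R^2 y/|y|^2$ for the inversion of $y$ and using the identity $(|y|/R)^2|x-y^*|^2=|x-y|^2+(R^2-|x|^2)(R^2-|y|^2)/R^2$, the explicit Green's function yields, for $m\geq 3$,
\begin{equation*}
G_{B_R}(x,y)\;\leq\;\frac{1}{(m-2)m\omega_m\,|x-y|^{m-2}}\quad\text{and}\quad G_{B_R}(x,y)\;\leq\;\frac{R\,d(x)}{m\omega_m\,|x-y|^m},
\end{equation*}
where $d(x)=R-|x|$. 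Setting $r=(|A|/\omega_m)^{1/m}$ and applying the bathtub principle to the pointwise minimum of these two bounds shows $\int_A G_{B_R}(x,y)\,dy \leq r^2/(2(m-2))$ whenever $r^2 \leq (m-2)R\,d(x)$; in the complementary regime a logarithmic overshoot forces $\log\bigl(r/\sqrt{(m-2)Rd(x)}\bigr) < \gamma/m - 1/2$, inconsistent with that regime. Combining this with the Poisson-kernel estimate $|\partial_\nu w_A(\bar x)|\leq 2r$ on $\partial B_R$, which for $r\leq\gamma R/(4m)$ yields $|\partial_\nu w_A|\leq\gamma R/(2m)<\gamma R/m=\gamma\,\partial_\nu v_{B_R}$, closes the inequality uniformly on $\overline A$. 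The $m=2$ case is parallel with logarithmic kernels.

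The two upper bounds are obtained by testing with a concentric ball. For $m\geq 3$, take $A=B_{\gamma^{1/2}R}$, so $|A|=\gamma^{m/2}\omega_m R^m$. Radial symmetry reduces the equation to $-(r^{m-1}v')'=r^{m-1}(\gamma-\mathbf{1}_{\{r<\gamma^{1/2}R\}})$ with $v(R)=0$ and $C^1$-matching at $r=\gamma^{1/2}R$; integrating explicitly,
\begin{equation*}
v_{B_R,A,\gamma}(0)\;=\;\frac{R^2}{m(m-2)}\bigl(\gamma^{m/2}-\gamma\bigr)\;<\;0,
\end{equation*}
which proves \eqref{ex1}. For $m=2$, the logarithmic radial solution yields $v_{B_R,A,\gamma}(0)=\tfrac14 R^2\bigl[\gamma-\alpha(1+\log(1/\alpha))\bigr]$ with $\alpha=\rho^2/R^2$; taking $\alpha=\gamma/(1+\log(1/\gamma))$ renders this strictly negative and gives $|A|=\gamma\pi R^2/(1+\log(1/\gamma))$, hence \eqref{ex2}. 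The main obstacle is Part~2: matching the interior bathtub estimate with the boundary normal-derivative control delicately enough to extract the sharp constant $\gamma/(4m)$, uniformly over all measurable $A$.
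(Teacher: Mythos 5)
Your argument for the first inequality $\mathfrak{C}_{-}(\Om,\gamma)\ge\mathfrak{C}_{-}(B_R,\gamma)$ is correct and takes a genuinely different route from the paper. Where the paper relaxes to weak-$\star$ limits of characteristic functions and then analyses a limit density $g$ via the Hopf boundary-point lemma, you work directly with a fixed $A$ of measure strictly below $\mathfrak{C}_{-}(B_R,\gamma)$, apply the strong Markov decomposition $v_{\Om,A,\gamma}=v_{B,A\cap B,\gamma}+\E_\cdot[v_{\Om,A,\gamma}(B_{\tau_B})]$ on the inner ball, and exploit that harmonic measure from an interior point charges all of $\partial B$. This bypasses the relaxation step entirely: the tangent-ball case kills a negative minimum because $\bar x_0\in\partial B\cap\partial\Om$ carries boundary value $0$, and the interior case is killed by the maximal-modulus choice of minimiser. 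One small point of hygiene: you need $|A\cap B|<\mathfrak{C}_{-}(B_R,\gamma)$ strictly (so write the contradiction hypothesis as: there exists $A$ with $|A|<\mathfrak{C}_{-}(B_R,\gamma)$ yet ${\rm essinf}\,v_{\Om,A,\gamma}<0$, and take the supremum at the end). Both approaches are valid; yours is shorter, the paper's relaxed-density framework is reused elsewhere in the article.

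The genuine gap is in the lower bound \eqref{e00}. You reduce to showing $w_A\le\gamma v_{B_R}$ on $\overline A$ and then attempt a two-regime bathtub estimate on the Green function together with a Poisson-kernel bound for $\partial_\nu w_A$, but you do not actually close either regime and explicitly flag this as the obstacle. The paper's argument is simpler and you should compare it with what you attempted. Bound $G_{B_R}(x,\cdot)$ by the Green function of the half-space $H_{\bar x}$ tangent to $B_R$ at $\bar x$; the elementary estimate $|x-y|^{2-m}-|x^*-y|^{2-m}\le(m-2)|x-x^*|\,|x-y|^{1-m}$ (with $x^*$ the mirror image of $x$) then gives, after a radial rearrangement of $A$ about $x$,
\begin{equation*}
\int_A G_{B_R}(x,y)\,dy\;\le\;2\,r_A\,|x-\bar x|,\qquad \omega_m r_A^m=|A|,
\end{equation*}
valid for all $x\in B_R$ (not just on $\overline A$), in every $m\ge 2$ (the planar case uses the logarithmic kernel and the bound $\log(1+s)\le s$). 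Pairing this with the uniform lower bound $v_{B_R}(x)\ge |x-\bar x|R/(2m)$ of Lemma \ref{lem1} (itself a one-line consequence of the $R$-smooth covering property) yields $v_{B_R,A,\gamma}(x)\ge |x-\bar x|\bigl(\gamma R/(2m)-2r_A\bigr)$, nonnegative once $r_A\le\gamma R/(4m)$, which is exactly \eqref{e00}. The key idea you were missing is not to try to dominate $w_A$ by $\gamma v_{B_R}$ via a split of the kernel, but to extract the common factor $|x-\bar x|$ in both terms, which makes the comparison pointwise and uniform.

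For the upper bounds \eqref{ex1} and \eqref{ex2} your computation is essentially the paper's (radial test set concentric with $B_R$); your closed form $v_{B_1,B_{\gamma^{1/2}},\gamma}(0)=(\gamma^{m/2}-\gamma)/(m(m-2))$ is correct and in fact slightly sharper than the paper's estimate, and the $m=2$ choice $\alpha=\gamma/(1+\log(1/\gamma))$ matches.
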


The  following inequality gives an upper bound for $\mathfrak{C}_{-}(\Om,\gamma)$ in terms of $\lambda(\Om)$.
\begin{theorem}\label{the4}
For every open set $\Om \subset \R^m$ with $\lambda(\Om)>0$,
\begin{equation*}
 \mathfrak{C}_{-}(\Omega,\gamma) \le C_1(m)\bigg(\frac{\gamma}{1-\gamma}\bigg)^{m/2} \lambda(\Om)^{-m/2},
 \end{equation*}
 where
 \begin{equation}\label{ee11.0}
 C_1(m)=\omega_m^{(m+2)/2}2^{5m^2/12}3^{m(m+2)/4}e^{2^{1/m}\lambda(B_1)^{1/2}/24}\bigg(\frac{12m(m+2)}{eC_2(m)^{1/2}}\bigg)^{m(m+2)/2},
 \end{equation}
 and where $C_2(m)$ is the constant in the Kohler-Jobin inequality \eqref{ea1} below.
\end{theorem}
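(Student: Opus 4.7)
The plan is to construct, for any $\varepsilon>0$, a measurable set $A\subset\Om$ of measure at most $C_1(m)(\gamma/(1-\gamma))^{m/2}\lambda(\Om)^{-m/2}+\varepsilon$ for which $v:=v_{\Om,A,\gamma}$ satisfies $\essinf v<0$; by Definition \ref{def1} this yields the claimed bound on $\mathfrak{C}_{-}(\Om,\gamma)$.

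\textbf{Step 1 (reduction to an integral inequality).} The key observation is a sufficient condition for sign change: pairing \eqref{e2} with the torsion function $v_\Om$ and using $-\Delta v_\Om=1$ gives, after integration by parts,
\[
\int_\Om v_{\Om,A,\gamma}\,dx \;=\; \int_\Om v_\Om(\gamma-\mathbf{1}_A)\,dx \;=\; \gamma T(\Om)-\int_A v_\Om\,dx.
\]
Thus $\int_A v_\Om>\gamma T(\Om)$ forces $\int_\Om v<0$, hence $\essinf v<0$ on a set of positive measure.

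\textbf{Step 2 (optimal shape of $A$).} By the bathtub principle, among sets of fixed measure the integral $\int_A v_\Om$ is maximised when $A$ is a super-level set of $v_\Om$. I therefore take $A=\{v_\Om>\tau\}$ and seek the largest $\tau$ (hence smallest $|A|$) compatible with the sufficient condition.

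\textbf{Step 3 (Kohler--Jobin on super-level sets).} The structural fact that makes everything work is that $(v_\Om-s)_+$ restricted to $E_s:=\{v_\Om>s\}$ is \emph{itself} the torsion function of $E_s$, since $-\Delta(v_\Om-s)=1$ on $E_s$ with zero boundary values. Applying \eqref{ee} to $E_s$ yields $\lambda(E_s)\le K_m/(\|v_\Om\|_\infty-s)$ with $K_m=4+3m\log 2$, and applying Kohler--Jobin (as quoted in \eqref{ea1}) to $E_s$ gives $T(E_s)\ge C_2(m)\lambda(E_s)^{-(m+2)/2}$. Since $T(E_s)\le(\|v_\Om\|_\infty-s)\,|E_s|$, combining the two produces the power-law lower bound
\[
\mu(s):=|\{v_\Om>s\}|\;\ge\;c(m)\bigl(\|v_\Om\|_\infty-s\bigr)^{m/2},
\]
where $c(m)$ depends explicitly on $C_2(m)$, $K_m$ and $\lambda(B_1)$ through Faber--Krahn.

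\textbf{Step 4 (threshold and volume estimate).} Plug this into the layer-cake identities
\[
\int_{\{v_\Om>\tau\}} v_\Om\,dx \;=\; \tau\mu(\tau)+\int_\tau^{\|v_\Om\|_\infty}\mu(s)\,ds, \qquad (1-\gamma)T(\Om)=\int_0^{\tau^*}\bigl(\mu(s)-\mu(\tau^*)\bigr)\,ds,
\]
where $\tau^*$ is the critical threshold at which the sufficient condition becomes an equality. The lower bound on $\mu$ furnishes a lower bound on $\tau^*$ of the rough form $\tau^*\gtrsim(1-\gamma)^{2/m}\|v_\Om\|_\infty$, and Markov's inequality $\mu(\tau^*)\le T(\Om)/\tau^*$ combined with Kohler--Jobin on $\Om$ itself (to convert $T(\Om)/\|v_\Om\|_\infty$ into a pure function of $\lambda(\Om)$) produces $|A|=\mu(\tau^*)\le C_1(m)(\gamma/(1-\gamma))^{m/2}\lambda(\Om)^{-m/2}$. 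Tracking constants through Faber--Krahn, Kohler--Jobin and \eqref{ee} gives the explicit expression for $C_1(m)$ in \eqref{ee11.0}.

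\textbf{Main obstacle.} The subtle point is producing the factor $(1-\gamma)^{-m/2}$. A crude argument that uses only the $L^\infty$-bound \eqref{ee} on $v_\Om$ together with a single ball placed near a maximum point of $v_\Om$ yields only a $\gamma^{m/2}$ factor, with no $(1-\gamma)^{-m/2}$ amplification, and such an argument moreover fails when $\gamma$ is close to $1$ because no sufficiently small ball fits inside $\Om$ near the maximum. The $(1-\gamma)^{-m/2}$ correction must therefore come from an application of Kohler--Jobin to the continuum of super-level sets $E_s$ rather than to $\Om$ alone — this is precisely why $C_2(m)^{1/2}$ appears in the denominator of $C_1(m)$ with the large exponent $m(m+2)/2$, and why the prefactor $e^{2^{1/m}\lambda(B_1)^{1/2}/24}$ enters (from comparison, at an optimised scale, with the torsion function of the unit ball).
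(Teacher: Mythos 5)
Your Step 1 reduction — testing for a sign change by forcing $\int_\Om v_{\Om,A,\gamma}<0$, i.e.\ $\int_A v_\Om>\gamma T(\Om)$ — is too weak a test, and this is a genuine gap rather than a technical omission. Since $\int_A v_\Om\le\|v_\Om\|_{L^\infty}|A|$, your condition forces $|A|\ge\gamma\,T(\Om)/\|v_\Om\|_{L^\infty}$, and the ratio $T(\Om)/\|v_\Om\|_{L^\infty}$ is \emph{not} controlled by $\lambda(\Om)^{-m/2}$: take $\Om=(0,L)\times(0,1)\subset\R^2$, for which $\lambda(\Om)\to\pi^2$ but $T(\Om)/\|v_\Om\|_{L^\infty}\sim L\to\infty$, or a disjoint union of $N$ unit balls, where $\lambda$ is fixed and $T(\Om)/\|v_\Om\|_{L^\infty}\sim N$. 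In those examples the theorem asserts an $L$- (resp.\ $N$-) independent bound on $\mathfrak C_-$, which no superlevel-set $A$ of $v_\Om$ with $\int_A v_\Om>\gamma T(\Om)$ can achieve, because those superlevel sets themselves have measure $\sim L$ (resp.\ $\sim N$). Relatedly, your final step invokes ``Kohler--Jobin on $\Om$ itself to convert $T(\Om)/\|v_\Om\|_\infty$ into a pure function of $\lambda(\Om)$''; but Kohler--Jobin gives only a \emph{lower} bound $T(\Om)\ge(C_2(m)/\lambda(\Om))^{(m+2)/2}$, and no reverse inequality exists, so this conversion cannot be carried out. A secondary gap: the hypothesis is only $\lambda(\Om)>0$, which does not imply $T(\Om)<\infty$ (again the infinite strip), so the identity $\int_\Om v_{\Om,A,\gamma}=\gamma T(\Om)-\int_A v_\Om$ need not even make sense.

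What is missing is localisation of the test. The paper does not test $\int_\Om v_{\Om,A,\gamma}<0$ but rather $\int_{A'}v_{\Om,A,\gamma}<0$ for a \emph{small} set $A'\subset A$: Lieb's intersection theorem produces a ball $B_{r_1}(x)$ of prescribed small volume with $\lambda(\Om\cap B_{r_1}(x))\le\lambda(\Om)+\lambda(B_{r_1})$, one takes $A'=\Om\cap B_{r_1}(x)$ and $A=\Om\cap B_{2^{1/m}r_1}(x)$, bounds $\int_{A'}\!\int_A G_\Om$ from below by $T(A')$ via Kohler--Jobin applied to $A'$ (not to $\Om$), and bounds the positive contribution $\gamma\int_{A'}\!\int_{\Om\sm A}G_\Om$ from above using the Gaussian heat-kernel estimate together with the spectral decay $e^{-t\lambda(\Om)/3}$ and the spatial separation between $A'$ and $\Om\sm A$. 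Both ingredients are local to the small ball, which is why the resulting bound depends only on $\lambda(\Om)$ and not on $T(\Om)$ or $|\Om|$. Your observation in Step~3 that $(v_\Om-s)_+$ is the torsion function of $E_s=\{v_\Om>s\}$, and the consequent bound $\mu(s)\gtrsim(\|v_\Om\|_\infty-s)^{m/2}$, is correct and potentially useful in other contexts, but it is grafted onto a global test that cannot yield the claimed estimate.
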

This implies that if $\Om$ is an open set with $T(\Om)<\infty$, then
\begin{equation}\label{ee12}
\mathfrak{C}_{-}(\Omega,\gamma) \le C_1(m)C_2(m)^{-m/2}\bigg(\frac{\gamma}{1-\gamma}\bigg)^{m/2}T(\Om)^{m/(m+2)}.
\end{equation}
The optimal coefficient of  $T(\Om)^{m/(m+2)}$ in \eqref{ee12} is not known. However, the Kohler-Jobin inequality suggests to prove (or disprove) optimality for balls.

\begin{theorem}\label{the4.1}
There exists $C_3(m)<\infty$ such that for every open, connected set $\Om \subset \R^m$ with $T(\Om)<\infty$,
\begin{equation}\label{ee11.1}
 \mathfrak{C}_{-}(\Omega,\gamma) \le C_3(m)\max\bigg\{\bigg(\frac{\gamma}{1-\gamma}\bigg)^{\frac m2},\bigg(\frac{\gamma}{1-\gamma}\bigg)^{\frac{m(m+2)}{2(m+1)}}\bigg\} \bigg(\frac{T(\Om)}{\textup{diam} (\Om)}\bigg)^{m/(m+1)}.
 \end{equation}
In particular, if $\Om$ is unbounded, then $ \mathfrak{C}_{-}(\Omega,\gamma)=0$.
The value of $C_3(m)$ can be read-off from the proof in Section \textup{\ref{sec4.1}}.
\end{theorem}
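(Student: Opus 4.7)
The plan is constructive: find a point $x_0\in\Om$ and a small set $A\sq\Om$ with $v_{\Om,A,\gamma}(x_0)<0$ and $|A|$ obeying \eqref{ee11.1}. Since $v_{\Om,A,\gamma}=\gamma v_\Om-w_A$, where $w_A$ solves $-\Delta w_A=\mathbf{1}_A$ with zero Dirichlet data, the task reduces to the pointwise inequality $w_A(x_0)>\gamma v_\Om(x_0)$.

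The first ingredient is a slicing argument exploiting the diameter. Let $e$ be a unit vector realising $D:=\textup{diam}(\Om)$, set $\Om_t:=\Om\cap\{x\cdot e=t\}$ and $\psi(t):=\int_{\Om_t}v_\Om\,d\Hm$, so that $\int_\R\psi\,dt=T(\Om)$ by Fubini and $\psi$ is supported in an interval of length at most $D$. Chebyshev's inequality gives a level $t_0$ with $\psi(t_0)\le 2T(\Om)/D$, and in turn a point $x_0\in\Om_{t_0}$ with $v_\Om(x_0)\le 2T(\Om)/(D|\Om_{t_0}|)$. Since the inscribed ball $B_\rho(x_0)\sq\Om$ (with $\rho=d(x_0,\partial\Om)$) meets $\Om_{t_0}$ in an $(m-1)$-disc of radius $\rho$, we have $|\Om_{t_0}|\ge\omega_{m-1}\rho^{m-1}$, so
\[
v_\Om(x_0)\le\frac{2\,T(\Om)}{\omega_{m-1}\,D\,\rho^{m-1}}.
\]

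The second ingredient is the local lower bound on $w_A(x_0)$: for $A=B_r(x_0)$ with $r\le\rho$, Green-function monotonicity gives $w_A(x_0)\ge v_{B_r(x_0)}(x_0)=r^2/(2m)$. Imposing $w_A(x_0)>\gamma v_\Om(x_0)$ together with $r\le\rho$ forces $\rho^{m+1}\gtrsim\gamma T(\Om)/D$, whence $|A|=\omega_m\rho^m\lesssim\gamma^{m/(m+1)}(T(\Om)/D)^{m/(m+1)}$, which already recovers the correct exponent $m/(m+1)$ on $T(\Om)/D$.

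The principal obstacle is sharpening the $\gamma$-dependence to match the maximum in \eqref{ee11.1}, both of whose exponents on $\gamma/(1-\gamma)$ differ from the $\gamma^{m/(m+1)}$ that the direct argument above yields. I would handle this by splitting on $\lambda(\Om)$ with a threshold chosen so that $\lambda^{-m/2}\sim(T(\Om)/D)^{m/(m+1)}$: above the threshold, Theorem \ref{the4} applies directly and produces the $(\gamma/(1-\gamma))^{m/2}$ prefactor; below the threshold, $\Om$ admits a sufficiently large inscribed ball and a refined construction based on the full-ball estimate \eqref{e00} of Theorem \ref{the3}, combined with the slicing above, generates the heavier prefactor $(\gamma/(1-\gamma))^{m(m+2)/(2(m+1))}$. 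Taking the maximum yields \eqref{ee11.1}, with $C_3(m)$ absorbing the $m$-dependent constants. The ``in particular'' assertion is immediate: for unbounded $\Om$ with $T(\Om)<\infty$, $\textup{diam}(\Om)=\infty$ makes the right-hand side of \eqref{ee11.1} vanish, forcing $\mathfrak{C}_{-}(\Om,\gamma)=0$.
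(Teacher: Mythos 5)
There is a genuine gap at the construction of the set $A$. You take $A=B_r(x_0)$ with $r\le\rho:=d(x_0,\partial\Omega)$, so that $B_r(x_0)\subset\Omega$ and hence $w_A(x_0)\ge r^2/(2m)$. To get $v_{\Omega,A,\gamma}(x_0)<0$ you need $r^2/(2m)>\gamma v_\Omega(x_0)$; combined with $r\le\rho$ and your bound $v_\Omega(x_0)\le 2T(\Omega)/(\omega_{m-1}D\rho^{m-1})$, this is \emph{feasible} only when $\rho^{m+1}\gtrsim \gamma T(\Omega)/D$. That is an unverified hypothesis, not something the slicing gives you, and it is exactly what tends to fail: the comparison $v_\Omega(x_0)\ge v_{B_\rho(x_0)}(x_0)=\rho^2/(2m)$ shows that a point where $v_\Omega$ is small is necessarily close to $\partial\Omega$, so the ball you can fit inside $\Omega$ at $x_0$ is then too small for $w_A(x_0)$ to beat $\gamma v_\Omega(x_0)$. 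Note also the logical direction: ``forces $\rho^{m+1}\gtrsim\gamma T/D$, whence $|A|=\omega_m\rho^m\lesssim\gamma^{m/(m+1)}(T/D)^{m/(m+1)}$'' is reversed — $\rho^{m+1}\gtrsim\gamma T/D$ is a \emph{lower} bound on $\rho$, hence a lower bound on $|A|$, the opposite of what \eqref{ee11.1} needs. The proposed fix of splitting on $\lambda(\Omega)$ does not close the gap either, because small $\lambda(\Omega)$ does not imply a large inscribed ball (a long thin slab, or a dumbbell of two large balls joined by a thin neck, can have $\lambda$ arbitrarily small while most of the domain has tiny inradius).

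The missing ingredient is the paper's Lemma \ref{lem4.1a}, which removes the requirement that the ball used to build $A$ lie inside $\Omega$: with $\vps=\frac{1-\gamma}{2m\gamma}$, if $v_\Omega\le\vps$ on $B_1(x_0)\cap\Omega$ then $v_{\Omega,\,B_1(x_0)\cap\Omega,\,\gamma}(x_0)\le 0$, regardless of how $B_1(x_0)$ sits relative to $\partial\Omega$. The paper then scales $\Omega$ so that $\mathfrak{C}_{-}(t\Omega,\gamma)=(1+\delta)|B_1|$; by the contrapositive of the lemma, $\sup_{B_1(x_0)}v_{t\Omega}>\vps$ for every $x_0\in t\Omega$, and integrating over a chain of balls along a diameter-realising segment (using the subharmonicity of $v_{t\Omega}(x)+|x-y|^2/(2m)$ to pass from the sup to an average) gives $T(t\Omega)\gtrsim_\gamma\textup{diam}(t\Omega)$, which unwinds to \eqref{ee11.1}. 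The two exponents in the maximum correspond to whether the radius fixed in \eqref{eq4.10} is larger or smaller than $1$. Your slicing idea is appealing but attacks the wrong obstruction; the essential difficulty here is controlling the torsion function near $\partial\Omega$, and Lemma \ref{lem4.1a} is built for exactly that.
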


We see from Theorems \ref{the1} and \ref{the3} that $\mathfrak{C}_{-}(B_R,\gamma)<\mathfrak{C}_{+}(B_R,\gamma)$. The isoperimetric inequality below generalises this to arbitrary open sets with finite measure.
\begin{theorem}\label{the5}
\begin{equation}\label{ee11.a}
 \sup \bigg\{\frac{\mathfrak{C}_{-}(\Omega,\gamma)}{|\Om|} : \Om \sq \R^m, \mbox{ $\Om$  \textup{open}, { $0<|\Om|<\infty$}}\bigg\} <\gamma.
\end{equation}

\end{theorem}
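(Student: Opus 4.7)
The plan is to upgrade the pointwise strict inequality $\mathfrak{C}_-(\Om,\gamma)<\gamma|\Om|$ (which already holds for each individual $\Om$) to a uniform quantitative bound $\mathfrak{C}_-(\Om,\gamma)/|\Om|\le c(\gamma,m)<\gamma$. The basic mechanism is a test-function inequality: integrating the equation $-\Delta v=\gamma-\mathbf{1}_A$ against $v_\Om\in H_0^1(\Om)$ (which is admissible by \eqref{ee}) and using $-\Delta v_\Om=1$ shows that $\operatorname{essinf}v_{\Om,A,\gamma}\ge 0$ forces
\[
0\le\int_\Om v=\gamma T(\Om)-\int_A v_\Om,
\]
so $\int_A v_\Om\le\gamma T(\Om)$. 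Taking $A_c:=\{v_\Om>t_c\}$ to be the super-level set of $v_\Om$ with $|A_c|=c|\Om|$ maximises $\int_A v_\Om$ among sets of given measure, and defines a concave non-decreasing function $h_\Om(c):=\int_{A_c}v_\Om/T(\Om)$ with $h_\Om(0)=0$, $h_\Om(1)=1$, and $h_\Om(c)\ge c$. Consequently $\mathfrak{C}_-(\Om,\gamma)\le c|\Om|$ whenever $h_\Om(c)>\gamma$.

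Equality $h_\Om(c)=c$ would force $v_\Om$ constant on $A_c$, which is impossible on any connected component since $v_\Om-|x|^2/(2m)$ is harmonic and hence real-analytic. So $h_\Om(\gamma)>\gamma$ strictly for each $\Om$. Two easy cases already give a uniform margin. Theorem \ref{the4.1} forces $\mathfrak{C}_-(\Om,\gamma)=0$ when $\Om$ is unbounded. For disconnected $\Om=\bigsqcup_i C_i$ with at least two components, the identities recorded after Definition \ref{def1} together with $\mathfrak{C}_-(C_i,\gamma)\le\mathfrak{C}_+(C_i,\gamma)=\gamma|C_i|$ (Theorem \ref{the1}) yield $\mathfrak{C}_-(\Om,\gamma)/|\Om|\le\gamma\min_i|C_i|/|\Om|\le\gamma/2$. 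The remaining case is bounded connected $\Om$.

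For bounded connected $\Om$ we split into two regimes of $\gamma$. For $\gamma$ below an explicit threshold $\gamma_0(m)<1$, Theorem \ref{the4} combined with the Faber-Krahn inequality $\lambda(\Om)\ge\lambda(B_1)\omega_m^{2/m}|\Om|^{-2/m}$ yields the shape-independent bound
\[
\frac{\mathfrak{C}_-(\Om,\gamma)}{|\Om|}\le\frac{C_1(m)}{\lambda(B_1)^{m/2}\omega_m}\bigg(\frac{\gamma}{1-\gamma}\bigg)^{m/2},
\]
which is strictly less than $\gamma$ exactly when $\gamma<\gamma_0(m)$. For $\gamma\in[\gamma_0(m),1)$ we exhibit $A$ directly via the Green-function representation $v(x_0)=\gamma v_\Om(x_0)-\int_A G_\Om(x_0,y)\,dy$: pick $x_0\in\Om$ with enough clearance from $\partial\Om$ and set $A=B_\rho(x_0)\subset\Om$; domain monotonicity gives $\int_A G_\Om(x_0,\cdot)\ge v_{B_\rho(x_0)}(x_0)=\rho^2/(2m)$, whence $v(x_0)<0$ as soon as $\rho^2>2m\gamma v_\Om(x_0)$. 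The radius $\rho$ is then chosen to satisfy simultaneously $\omega_m\rho^m\le(\gamma-\epsilon)|\Om|$ and $\rho\le d(x_0,\partial\Om)$, with uniform $\epsilon(\gamma,m)>0$, using the universal bound $\|v_\Om\|_\infty\le C(m)|\Om|^{2/m}$ that follows from \eqref{ee} and Faber-Krahn.

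The hard part is the quantitative patching between the two regimes. Thin shapes (with inradius small compared to $|\Om|^{1/m}$) fail to admit the inscribed ball needed for the Green-function construction and must be peeled off separately via the $T/\operatorname{diam}$ estimate of Theorem \ref{the4.1}; the most delicate point is verifying that a single $\epsilon(\gamma,m)>0$ works uniformly across thin, moderate, and fat bounded connected shapes, with the constants from Theorems \ref{the4} and \ref{the4.1} matched cleanly at the threshold $\gamma_0(m)$.
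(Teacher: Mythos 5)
Your plan differs substantially from the paper's. The paper sidesteps the uniformity issue entirely via compactness: it relaxes the supremum in \eqref{ee11.a} to the class of quasi-open sets, proves (using a $\gamma$-convergence/weak-$L^2$ argument together with the diameter control from Theorem \ref{the4.1}) that the relaxed problem has a maximiser $\Om^*$, and then deduces strict inequality by applying the per-set estimate $\mathfrak{C}_{-}(\Om^*,\gamma)<\gamma|\Om^*|$ at that single extremal set. Once a maximiser exists, one needs only the pointwise strict inequality at $\Om^*$, never a uniform bound over all admissible $\Om$. Your approach instead tries to establish the uniform bound directly, which is strictly harder and is where the gap lies.

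The gap is real and you flag it yourself: the sentence ``the most delicate point is verifying that a single $\epsilon(\gamma,m)>0$ works uniformly across thin, moderate, and fat bounded connected shapes'' is an admission that the quantitative patching has not been carried out, and this patching \emph{is} the content of the theorem. The per-$\Om$ strict inequality $h_\Om(\gamma)>\gamma$ (coming from your test-function identity $\int_\Om v = \gamma T(\Om) - \int_A v_\Om$ and the fact that $v_\Om$ is non-constant) is genuinely easier and does not by itself give a uniform margin; the family $\{h_\Om\}$ could a priori approach the diagonal. Your attempt to close the gap in the large-$\gamma$ regime also has a concrete quantitative flaw: bounding $v_\Om(x_0)$ by $\|v_\Om\|_\infty\le C(m)|\Om|^{2/m}$ forces $\rho$ so large that, already for moderately thin domains (e.g.\ a slab of thickness $\vps$ with unit volume), no admissible $x_0$ with $B_\rho(x_0)\subset\Om$ exists whenever $\gamma\gtrsim 1/m$. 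One must instead choose $x_0$ at an intermediate height of $v_\Om$ where both $v_\Om(x_0)$ is controlled and the inradius is adequate, and one then needs a lower bound on the distance-to-boundary in terms of $v_\Om$ for arbitrary bounded connected open sets — exactly the kind of estimate that is delicate without regularity hypotheses, and that the paper avoids altogether by passing to a maximiser. So: the skeleton is plausible, and the test-function inequality plus the $\gamma$-split is a reasonable sketch, but the proof as written does not close; the hard uniform estimates are precisely what is missing, and the paper's compactness route is the mechanism that makes that work unnecessary.
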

The theorem above implies that $\mathfrak{C}_{-}(\Omega,\gamma)\le C(m, \gamma) |\Om|$ for every open set of finite measure, with $ C(m,\gamma)<\gamma$.
The proof of Theorem \ref{the5} relies on the relaxation of the shape optimisation problem \eqref{ee11.a} to the larger class of quasi open sets.  We  shall prove that the supremum is attained at some quasi open set $\Omega^*$ for which
$\mathfrak{C}_{-}(\Omega^*,\gamma)< \gamma |\Om^*|$.

The optimal value $ C(m,\gamma) = \frac{\mathfrak{C}_{-}(\Omega^*,\gamma)}{|\Om^*|} $ is not known, nor  whether $\Omega^*$   is open. The symmetry breaking phenomenon for balls   stated   in Theorem \ref{the6} below does not support the ball to be a maximiser.

Given a constant $c \in ( \mathfrak{C}_{-}(\Omega,\gamma), |\Om|)$, there exists at least one set $A \sq \Om$, $|A|=c$  such that  ${\rm essinf} \; v_{\Omega,A,\gamma}< 0$. A natural question is to find the best location of the set $A$ of measure $c$, which minimises ${\rm essinf} \; v_{\Omega,A,\gamma}$. This question is of particular interest for values of $c$ close to $\mathfrak{C}_{-}(\Omega,\gamma)$, as this gives information on where the geometry of $\Om$   is most sensitive to negative values. We prove the following shape optimisation result for the optimal location.

\begin{theorem}\label{the6} Let $\gamma \in (0,1)$ and let $\Om \subset \R^m$  be an open, bounded and connected set with a smooth boundary $\partial \Om$. For every $c \in ( \mathfrak{C}_{-}(\Omega,\gamma), |\Om|)$, the shape optimisation problem
\begin{equation}\label{e55}
\min \{ {\rm essinf} \; v_{\Om, A, \gamma } : A \subset \Om, |A|=c\},
\end{equation}
has a solution. Moreover, if $\Om$ is a ball $B$ then, depending on the value of $c$, the  optimal locations may be radial or not.
\end{theorem}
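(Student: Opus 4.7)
The plan is to prove existence by relaxation and an extreme-point argument, and then to exhibit the symmetry dichotomy for the ball by Green-function comparison in two opposite volume regimes.

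\medskip

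\noindent\textbf{Existence.} I would enlarge the admissible class to the convex, weak-$*$ compact set
$$K_c:=\Big\{\theta\in L^\infty(\Om):0\le\theta\le 1 \text{ a.e.},\ \textstyle\int_\Om\theta=c\Big\}.$$
For $\theta\in K_c$ let $v_\theta$ solve $-\Delta v_\theta=\gamma-\theta$ in $\Om$ with $v_\theta=0$ on $\partial\Om$. Uniform $L^\infty$-bounds on the right-hand side together with smoothness of $\partial\Om$ yield, via $W^{2,p}$-elliptic regularity and Arzel\`a--Ascoli, the implication $\theta_n\rightharpoonup^*\theta\Rightarrow v_{\theta_n}\to v_\theta$ uniformly on $\ov\Om$. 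Hence $F(\theta):={\rm essinf}\,v_\theta$ is weak-$*$ continuous and, being the infimum of the affine functionals $\theta\mapsto v_\theta(x)$, concave on $K_c$. By Bauer's minimum principle the minimum of $F$ on $K_c$ is attained at an extreme point, and a bang-bang computation identifies the extreme points of $K_c$ as $\{{\bf 1}_A:|A|=c\}$. This yields a minimiser for \eqref{e55}.

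\medskip

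\noindent\textbf{Radial regime ($c$ close to $|B|$).} Write $B=B_R$, $E=B\sm A$, so $v=-(1-\gamma)v_B+u_E$, where $u_E$ solves $-\Delta u_E={\bf 1}_E$ with $u_E=0$ on $\partial B$. Since $v_B$ attains its maximum at the origin, $v(0)=-(1-\gamma)v_B(0)+u_E(0)$ is a natural upper bound for ${\rm essinf}\,v$. Minimising this upper bound over $E$ of fixed measure is a bathtub problem for $E\mapsto\int_E G_{B_R}(0,y)\,dy$: since $G_{B_R}(0,\cdot)$ is radial and decreasing in $|y|$, the minimum is attained on the sub-level set $\{y\in B:G_{B_R}(0,y)\le t\}$, i.e.\ a spherical shell adjacent to $\partial B$. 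For this shell $u_E$ is harmonic in the inner cavity and therefore radially constant there; a direct radial computation shows that any critical point of $v$ in the shell satisfies $v''=-\gamma<0$, so it is a local maximum. Consequently ${\rm essinf}\,v$ is attained at the origin and coincides with our bound, and the optimum is radial.

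\medskip

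\noindent\textbf{Non-radial regime ($c$ close to $\mathfrak{C}_-(B,\gamma)$).} Here $|A|$ is small and $v=\gamma v_B-u_A$, so ${\rm essinf}\,v$ is attained near a peak of $u_A$. For $A=B_r(x_0)$ a small ball, a leading-order Green-function expansion gives $u_{B_r(x_0)}(x_0)=r^2/(2(m-2))+O(r^m)$, essentially independent of $x_0$ when $m\ge 3$ since the image-charge contribution is $O(r^m)\ll r^2$. Consequently
$$v(x_0)\approx\gamma v_B(x_0)-\frac{r^2}{2(m-2)},$$
which is strictly smaller for $|x_0|$ close to $R$ (where $v_B(x_0)$ is small) than for $x_0=0$. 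In particular, at the critical radius where $v(0)=0$ for the centred ball, an off-centred translate gives $v(x_0)\sim -\gamma R^2/(2m)$. A case-by-case comparison with the other radial candidates of the same small volume (annular shells, nested balls) then shows that for $c$ slightly above $\mathfrak{C}_-(B,\gamma)$ the minimiser cannot be radial.

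\medskip

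\noindent\textbf{Main obstacle.} The existence part is soft analysis. The delicate point is the symmetry-breaking comparison, because the sign-changing right-hand side rules out classical Talenti--Schwarz rearrangement. Two different tools are needed in the two regimes: the bathtub principle for $u_E(0)$ combined with harmonicity in the spherical cavity when $c$ is near $|B|$; and a Green-function expansion separating the $r^2/(2(m-2))$ diagonal contribution from the $O(r^m)$ image-charge correction for translates of a small ball when $c$ is near $\mathfrak{C}_-(B,\gamma)$. The technical heart is the elimination of radial competitors (nested balls, annular shells) in the non-radial regime; the planar case $m=2$ would require a logarithmic refinement of the Green-function expansion.
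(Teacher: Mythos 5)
Your existence argument is correct and, in its final step, genuinely cleaner than the paper's. You relax over the same set (with $f=\gamma-\theta$ your $K_c$ is exactly the paper's ${\mathcal F}$), you obtain weak-$\star$ continuity of $\theta\mapsto{\rm essinf}\,v_\theta$ by the same $W^{2,p}$/Arzel\`a--Ascoli route, and you observe concavity. Where you then invoke Bauer's minimum principle to land directly on an extreme point $1_A$, the paper does the work by hand: it perturbs a putative non--bang-bang minimiser $f$ by $\pm t(1_{A_1}-1_{A_2})$ on the ``intermediate'' set, and combines concavity of $m(\cdot)$ at the optimum with the PDE to force $G_\Om(x^*,\cdot)$ to be constant on a set of positive measure, a contradiction. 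Your route is shorter and more robust; the paper's is more informative about the structure of the optimum. Both are valid.

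The radial regime is where your proposal has a genuine gap. You correctly derive $v=-(1-\gamma)v_B+u_E$ and use the bathtub principle to show that the complementary spherical shell minimises $u_E(0)$, hence $v_{A'}(0)\ge v_{\rm ball}(0)$ for every competitor $A'$ with $|A'|=c$; you also verify that for the concentric configuration ${\rm essinf}\,v$ is attained at the origin. But these two facts do not combine to give what is needed, namely ${\rm essinf}\,v_{A'}\ge{\rm essinf}\,v_{\rm ball}$. You have ${\rm essinf}\,v_{A'}\le v_{A'}(0)$ (trivially) and $v_{A'}(0)\ge v_{\rm ball}(0)$, which together say nothing: a non-radial competitor could perfectly well dip below $v_{\rm ball}(0)$ at some off-centre point while satisfying $v_{A'}(0)\ge v_{\rm ball}(0)$. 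What is required is a \emph{global} lower bound on $v_{A'}$, not a lower bound at the single point $0$. The paper obtains this (Remark 3) by applying Talenti's comparison to $-v_{B_R,A',\gamma}$ restricted to the negativity set $\{v_{B_R,A',\gamma}<0\}$, which dominates $-v_{A'}$ pointwise by a radial model and hence controls the essential infimum directly; the bathtub estimate at a fixed point cannot be a substitute for that step.

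For the non-radial regime your Green-function expansion is the right heuristic and is in the same spirit as the paper's discussion, but note that the paper itself does not give a complete proof here either: Remark 4 reports a numerical observation together with a reference to Poisson-kernel estimates. You should also be more careful about the uniformity of the $O(r^m)$ error: the image-charge contribution is $O(r^m)$ only when ${\rm dist}(x_0,\partial B)\gg r$, and it actually \emph{reduces} $u_{B_r(x_0)}(x_0)$ (hence raises $v(x_0)$) as $x_0$ approaches $\partial B$, so the competition between the decay of $\gamma v_B(x_0)$ and the loss in $u_{B_r(x_0)}(x_0)$ determines an optimal off-centre position rather than an arbitrary one near the boundary. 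This is precisely what the two-dimensional numerical example in the paper (a disc centred at $(0.52,0)$ inside $B_1$) illustrates, and the $m=2$ logarithmic version must be worked out to match that example.
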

The existence of an optimal set relies partly on a concavity property of the shape functional $A \mapsto {\rm essinf} \; v_{\Om,A,\gamma}$. We point out that the proof relies on both the concavity, and the analysis of optimality conditions in relationship with the partial differential equation \eqref{e1} (see \cite{CL96}). If $\Om$ is a ball $B$ and $c$ is close to $|B|$, then the optimal location is a ball. If $c$ is close to $\mathfrak {C}_{-}(B,\gamma)$  then the optimal location is no longer radial.  This symmetry breaking phenomenon occurs at a value $c\in (\mathfrak {C}_{-}(B,\gamma), \gamma |B|)$, and is supported by analytical, and numerical computations.

Theorem \ref{the6} can be interpreted both as a (rather non-standard) shape optimisation problem or as an optimisation problem in a prescribed class of rearrangements, see, for example \cite{ALT89}. We also refer  to the paper of Burton and Toland \cite{BT11} for models of steady waves with vorticity, where the distribution of the vorticity is prescribed, but we point out that our problem is essentially of  different nature since the functional to be minimised  is not an energy of the problem.

 The proofs of Theorems \ref{the1}, \ref{the2}, \ref{the3}, \ref{the4}, \ref{the4.1}, \ref{the5} and \ref{the6} are deferred to Sections \ref{sec1}, \ref{sec2}, \ref{sec3}, \ref{sec4}, \ref{sec4.1}, \ref{sec5}, and \ref{sec6} below.

\section{Proof of Theorem \ref{the1} \label{sec1}}
In order to simplify notation, throughout the paper, if $\Om$ is an open set and $A\subset \R^m$ is measurable, not necessarily contained in $\Om$,  by $v_{\Om,  A, \gamma}$   we mean $v_{\Om, \Om\cap A, \gamma}$.

\begin{proof} Firstly assume that $\Om\subset \R^m$ is an open set with finite measure. Assume that $A \subset \Om$ is a measurable set such that $v_{\Om, A, \gamma} \ge 0$. In a first step, we shall prove  that $|A|\le \gamma |\Om|$. As a consequence,  $\mathfrak{C}_{+}(\Omega, \gamma)\le \gamma |\Om|$.

Indeed, since $v_{\Om, A, \gamma} \ge 0$, one can use Talenti's theorem (see for instance \cite[Theorem 3.1.1]{ke06}) in the following way. We denote  by $v^*$ the Schwarz rearrangement of $v_{\Om, A, \gamma}$, and by $f^*$ the rearrangement of $ \gamma1_{\Om\sm A}-(1- \gamma)1_A$. There exist two positive values $0<r_1<r_2$ such that
 $f^*=  \gamma1_{B_{r_1}}-(1- \gamma)1_{B_{r_2}\sm B_{r_1}}$, where $r_1$ is such that $|B_{r_1}|=|\Om \sm A|$ and $|B_{r_2}|=|\Om|$.   By   Talenti's theorem, we get
$$0\le v^* \le v_{B_{r_2}, B_{r_1}^c,  \gamma}.$$
By elementary  computations,  one gets the expression  for  $v_{B_{r_2},  B_{r_1}^c,  \gamma}$. Indeed, the solution
$v_{B_{r_2},  B_{r_1}^c, \gamma}$ is radially symmetric and satisfies the equation
$$-v''- \frac{m-1}{r} v' =  \gamma1_{[0,r_1]} - (1- \gamma)1_{[r_1,r_2]},$$
 with initial condition  $v'(0)=0,$ and $v(r_2)=0$. Moreover, the solution is $C^{1, \alpha}$ regular, for some $\alpha >0$.

We integrate separately on $[0,r_1]$, and on $[r_1,r_2]$, and write the equality of the left- and right-derivatives in $r_1$, namely $v'_-(r_1)=v'_+(r_1)$. Hence, we get
$$ - \gamma \frac{r_1}{m}= \frac{r_2^{m-1}}{r_1^{m-1}}v'(r_2)-(1- \gamma)\frac{r_2^m}{mr_1^{m-1}}+ (1- \gamma)\frac{r_1}{m}.$$
In general, from the positivity of $ v_{B_{r_2},  B_{r_1}^c, \gamma}$ one gets that $v'(r_2)\le 0$. Hence,
$$(1-\gamma)\frac{r_2^m}{mr_1^{m-1}}\le \frac{r_1}{m},$$
which gives $r_1\ge (1- \gamma)^\frac 1m r_2$, or  $|B_{r_1}|\ge (1- \gamma) |B_{r_2}|$.
Finally, one gets that $|A|\le \gamma |\Om|$.   Hence $\mathfrak{C}_{+}(\Omega,\gamma)\le \gamma |\Omega|$.

As a byproduct of the computation, we  observe that the constant $\gamma$ in Theorem \ref{the1} is sharp, and that equality holds for the ball. As soon as, $r_1< (1- \gamma)^\frac 1m r_2$, one gets that $v'(r_2)>0$. This means that as $v(r_2)=0$ the solution is not positive near the boundary of the ball.

In order to prove the converse inequality, let us  prove that for every $\varepsilon >0$, there exists a set $A \subset \Om$ of measure $\gamma |\Om|-\varepsilon$ such that
$v_{\Om, A, \gamma} \ge 0$. This will imply that $\mathfrak{C}_{+}(\Omega, \gamma)\ge \gamma |\Om|$.

The construction is based on the following observation. There exists a finite family of mutually disjoint balls $\cup_{i=1}^k B_i$ contained in $\Om$ such that
$$|\Om \setminus \cup_{i=1}^k B_i|< \varepsilon.$$
In every ball, we display the set $A_i$ of measure $\gamma |B_i|$ in an annulus centred at the center of $B_i$ and having $\partial B_i$ as external boundary.  Hence   $v_{B_i, A_i, \gamma} \ge 0$. Moreover,   since  the sets $B_i$ are mutually disjoint we get that
$$v_{\cup_iB_i, \cup_iA_i, \gamma} \ge 0.$$
We  have   the following.
\begin{lemma}\label{lem:ext}
Let $\Om_1 \sq  \Om_2 \sq \R^m$ be  open sets   with   finite measure, $f \in L^2( \Om_2),$ and  let   $u_1, u_2$ weak solutions of
$$ -\Delta u_i =f \mbox { on } \Om_i, u \in H^1_0(\Om_i), i=1,2.$$
If $u_1\ge 0$ on $\Om_1$ and  $f\ge 0$ on $\Om_2 \sm \Om_1$ then
$u_2\ge 0$ on $\Om_2$.
\end{lemma}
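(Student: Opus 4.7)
The natural approach is to extend $u_1$ by zero to a function $\tilde u_1 \in H^1_0(\Om_2)$ and to show that $u_2 \ge \tilde u_1$ on $\Om_2$. Since $u_1 \ge 0$ on $\Om_1$ by hypothesis, $\tilde u_1 \ge 0$ everywhere on $\Om_2$, and the comparison would yield $u_2 \ge \tilde u_1 \ge 0$, proving the lemma.

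The comparison itself is a standard weak maximum principle argument. I would test with $\psi := (\tilde u_1 - u_2)^+ \in H^1_0(\Om_2)$: the equation for $u_2$ gives $\int_{\Om_2} \nabla u_2 \cdot \nabla \psi = \int_{\Om_2} f \psi$, while the weak subsolution property
\[
\int_{\Om_2} \nabla \tilde u_1 \cdot \nabla \vphi \le \int_{\Om_2} f \vphi, \qquad \forall\, \vphi \in H^1_0(\Om_2),\ \vphi \ge 0,
\]
applied with $\vphi = \psi$ and subtracted from the equation for $u_2$, yields $\|\nabla \psi\|_{L^2(\Om_2)}^2 \le 0$. Hence $\psi \equiv 0$ and $u_2 \ge \tilde u_1$ on $\Om_2$.

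The real content of the proof is therefore to establish the displayed subsolution inequality. Because $\tilde u_1=0$ on $\Om_2\sm\Om_1$, its left-hand side equals $\int_{\Om_1} \nabla u_1 \cdot \nabla \vphi$; on the right, the hypothesis $f \ge 0$ on $\Om_2\sm\Om_1$ combined with $\vphi\ge 0$ gives $\int_{\Om_2\sm\Om_1} f\vphi \ge 0$, so the task reduces to $\int_{\Om_1}\nabla u_1 \cdot \nabla \vphi \le \int_{\Om_1} f\vphi$ for $\vphi \ge 0$ whose restriction to $\Om_1$ need not vanish on $\partial \Om_1$. Heuristically this is integration by parts together with the outward normal derivative $\partial_\nu u_1 \le 0$ on $\partial \Om_1$, a sign forced by $u_1 \ge 0$ in $\Om_1$ and $u_1=0$ on $\partial \Om_1$. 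For a general open $\Om_1$ with only $H^1_0$-regularity I would rigorise this step by testing $-\Delta u_1 = f$ against the Lipschitz truncation $\vphi_N := \min(\vphi, N u_1) \in H^1_0(\Om_1)$ and letting $N \to \infty$.

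The main obstacle is precisely this limit: the truncation produces a nonnegative defect term $N\int_{\{N u_1 \le \vphi\}}|\nabla u_1|^2$ whose sign must be exploited, and on $\{u_1=0\}$ the truncation degenerates, so one also needs $f=0$ a.e.\ on $\{u_1=0\}$. The latter follows from the Stampacchia chain rule (giving $\nabla u_1 = 0$ a.e.\ on $\{u_1=0\}$), the $H^2_{\textup{loc}}$-regularity afforded by $f\in L^2$, and a second application of Stampacchia to each partial derivative of $u_1$. Once these technicalities are handled the subsolution inequality is secured and the weak comparison argument above completes the proof.
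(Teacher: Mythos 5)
Your proof is correct and follows essentially the same route as the paper's: both extend $u_1$ by zero to $\tilde u_1\in H^1_0(\Om_2)$, recognise that the hypotheses force $\tilde u_1$ to be a (distributional) subsolution of $-\Delta w=f$ on $\Om_2$, and conclude $0\le\tilde u_1\le u_2$ by the weak maximum principle. The only difference is one of detail: the paper simply asserts ``$-\Delta u_1\le f$ in $\mathcal D'(\Om_2)$'' as an immediate consequence of the hypotheses, whereas you supply the full justification of that inequality for a general open $\Om_1$ (truncation $\min(\vphi,Nu_1)$, sign of the defect term, Stampacchia applied twice via $H^2_{\textup{loc}}$ to show $f=0$ a.e.\ on $\{u_1=0\}$), so your write-up is a rigorization of the same idea rather than a different approach.
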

\begin{proof}
As a consequence of the hypotheses, {we get}
$$-\Delta u_1 \le  f \mbox{ in } {\mathcal D}'(\Om_2).$$
Hence, by the maximum principle
$$0\le u_1 \le u_2 \mbox { on }\Om_2.$$
\end{proof}

A direct consequence of Lemma \ref{lem:ext} is that if $\Om_1 \sq \Om_2$  then   $\mathfrak{C}_{+}(\Om_1, \gamma)\le \mathfrak{C}_{+}(\Om_2, \gamma)$. Indeed, for every measurable set $A \sq \Om_1$ such that $v_{\Om_1, A, \gamma} \ge 0$ we get $v_{\Om_2, A, \gamma} \ge 0$.

Coming back to the proof of Theorem \ref{the1}, using the additivity and monotonicity property of $\mathfrak{C}_{+}$ we get that
$$\mathfrak{C}_{+}(\Om, \gamma) \ge \gamma |\cup _i B_i|\ge \gamma |\Om| -\gamma\varepsilon.$$
  The theorem follows by letting $\varepsilon \rightarrow 0$.

\end{proof}

\section{Proof of Theorem \ref{the2}}\label{sec2}

We first introduce some basic notation and properties. For a non-empty open set $\Omega\subset \R^m$ we denote by $G_{\Om}(x,y),\,x\in \Om,\, y \in \Om,\,{ x\ne y,}$ the kernel of the resolvent of the Dirichlet Laplacian acting in $L^2(\Om)$. This function exists and is well defined for all $x\neq y$, provided $m\ge 3$. It also exists for $m=2$ for example under the hypothesis that the torsion function $v_{\Om}$ defined by approximation on balls, is locally finite.  The resolvent kernel is non-negative, symmetric in $x$ and $y$, and is monotone increasing in $\Om$. That is, if $\Om_1\subset \Om_2$, then
\begin{equation}\label{ee2.1}
0\le G_{\Om_1}(x,y)\le G_{\Om_2}(x,y),\, x\in \Om_1,\,y\in\Om_1, x\ne y.
\end{equation}
If $v_{\Om}$ is locally finite, then
\begin{equation*}
v_{\Omega}(x)=\int_{\Omega}dy\,G_{\Omega}(x,y).
\end{equation*}
The monotonicity in \eqref{ee2.1} implies that both the torsion function $v_{\Om}$, and torsion $T(\Om)$ are monotone increasing in $\Om$.

We have also that
\begin{align}\label{ee3}
v_{\Omega,A,\gamma}(x)&=\int_{\Om}dy\,G_{\Om}(x,y)\big(\gamma 1_{\Om\setminus A}(y)-(1-\gamma)1_A(y)\big)\nonumber \\
&=\int_{\Om}dy\,G_{\Om}(x,y)\big(\gamma 1_{\Om}(y)-1_A(y)\big)\nonumber \\ & =\gamma v_{\Om}(x)-\int_{A}dy\,G_{\Om}(x,y).
\end{align}
Formula \eqref{ee3} implies that
\begin{equation*}
-(1-\gamma)v_{\Omega} \le \,  v_{\Omega,A,\gamma}\le \gamma v_{\Omega}.
\end{equation*}

\noindent{\it Proof of Theorem \textup{\ref{the2}}. }
 Let $\Om=\Delta OAB$ be a triangle, with  $\alpha:=\angle BOA\le \frac{\pi}{3}$ at the origin, and oriented such that the positive $x$-axis is the bisectrix of that angle. Let $W_{\alpha}$ be the infinite wedge with vertex at $O$, and edges at angles $\pm\frac12\alpha$ with the positive $x$-axis, which contain the two sides $OA$ and $OB$ of $\Om$. Let $W_{\alpha, c}$ be the radial sector with area $c$ and edges at angles $\pm\frac12\alpha$. Then $W_{\alpha,c}\subset \Om$ for all $c$ sufficiently small.
We have by monotonicity that
\begin{align}\label{f1}
v_{\Om,W_{\alpha, c},\gamma}(x)&=\int_{\Om}dy\, G_{\Om}(x,y) \big(\gamma {\bf 1}_{\Om}-{\bf 1}_{W_{\alpha, c}}\big)(y)\nonumber \\&
\le \gamma \int_{W_{\alpha}}dy\,G_{W_\alpha}(x,y)- \int_{W_{\alpha,c}}dy\,G_{W_{\alpha,c}}(x,y)\nonumber \\ &
=\gamma v_{W_{\alpha}}(x)-v_{W_{\alpha,c}}(x).
\end{align}
In Cartesian coordinates $x=(x_1,x_2)$, we have that
\begin{equation}\label{f2}
v_{W_{\alpha}}(x_1,x_2)=\frac{x_2^2-s^2x_1^2}{2(s^2-1)},
\end{equation}
where $s=\tan(\alpha/2)$.
In polar coordinates $x=(r;\theta)$ we have by p.279 in \cite{TG} for the sector with radius $a=(2c/{\alpha})^{1/2}${,}
\begin{align}\label{f3}
v_{W_{\alpha,c}}(r;\theta)=&\frac{r^2}{4}\bigg(\frac{\cos(2\theta)}{\cos \alpha}-1\bigg)\nonumber \\ & +\frac{4a^2\alpha^2}{\pi^3}\sum_{n=1,3,5,...}\frac{(-1)^{(n+1)/2}(r/a)^{n\pi/{\alpha}}\cos(n\pi\theta/{\alpha})}
{n\bigg(n+\frac{2\alpha}{\pi}\bigg)\bigg(n-\frac{2\alpha}{\pi}\bigg)}.
\end{align}
We observe that for $\theta=0$ the terms in the series in the right-hand side of \eqref{f3} are alternating and decreasing in absolute value.
Hence
\begin{equation*}
v_{W_{\alpha,c}}(r;0)=\frac{r^2}{4}\bigg(\frac{1}{\cos \alpha}-1\bigg)-\frac{4a^2\alpha^2}{\pi^3}\bigg(\frac{r}{a}\bigg)^{\pi/{\alpha}}\bigg(1-\frac{4\alpha^2}{\pi^2}\bigg)^{-1}.
\end{equation*}
By \eqref{f2} $v_{W_{\alpha}}(x_1,0)=\frac{s^2x_1^2}{2(1-s^2)}$, and so in polar coordinates,
\begin{equation}\label{f5}
v_{W_{\alpha}}(r;0)=\frac{r^2}{4}\bigg(\frac{1}{\cos \alpha}-1\bigg).
\end{equation}
{By \eqref{f1}-\eqref{f5} we have}
\begin{equation*}
v_{\Om,W_{\alpha, c},\gamma}(x_1,0)\le (\gamma-1)\frac{s^2x_1^2}{2(1-s^2)}+O\big(x_1^{\pi/{\alpha}}\big),\, x_1\downarrow 0,
\end{equation*}
which is negative for all $x_1$ sufficiently small.

We see from the proof above that we could have chosen any angle of the triangle provided that angle is strictly less than $\pi/2$.
The proof above also shows that the infinite wedge $W_{\alpha},\alpha<\pi/2$ with radial sector $W_{\alpha,c},c>0$ has a sign changing solution
$v_{W_{\alpha},W_{\alpha,c},\gamma}$.
\hspace*{\fill }$\square $

\section{Proof of Theorem \ref{the3}\label{sec3}}

 \noindent{\it Proof of Theorem \textup{\ref{the3}}.}   Let us start by observing that the following covering property holds: for every $x\in \Om$, there exists a ball $B$ of radius $R$ such that $x \in B\subset \Om$. Indeed, let   $\bar x \in \partial \Om$ be a point which realises the distance to the boundary. Since the boundary of $\Om$ is of class $C^2$, then $x-\bar x$ is normal to the boundary $\partial \Om$ at $\bar x$. If $|x-\bar x|\ge R$, then $B_R(x) \subset \Om$. If $|x_0-\bar x|<  R$, then $x$ belongs to the ball of radius $R$ tangent to $\partial \Om$ at $\bar x$.

Assume for a contradiction that
$$\mathfrak{C}_{-}(\Omega,\gamma)< \mathfrak{C}_{-}(B_R, \gamma).$$
For every $\vps >0$ such that $\mathfrak{C}_{-}(\Omega,\gamma)+\vps< \mathfrak{C}_{-}(B_R, \gamma)$, there exists a set $A_\vps\subset \Om$ such that $|A_\vps | \le \mathfrak{C}_{-}(\Omega,\gamma)+\vps$ and
$${\rm essinf} \; v_{\Om, A_\vps,\gamma}<0,$$
the infimum being attained  at  $x_\vps$.
Taking a sequence $\vps \rightarrow 0$, we may assume (up to extracting suitable subsequences) that
$$1_{A_\vps} \rightarrow g \mbox{ weakly-$\star$ in } L^\infty,  \;\; x_\vps  \rightarrow x_* \in \overline \Om.$$
Then $ \int_\Om g   = \mathfrak{C}_{-}(\Omega,\gamma)$.  Let $v_{\Om,g,\gamma}$ denote  the solution of
\begin{equation*}
-\Delta v=\gamma(1-g)-(1-\gamma)g,\, v\in H_0^1(\Omega),
\end{equation*}
{we get} $$v_{\Om, A_\vps,\gamma} \rightarrow v_{\Om,g,\gamma}$$
uniformly on $\overline \Omega$. This is a consequence of the elliptic regularity of the solutions, which are uniformly bounded in $C^{1, \alpha} (\overline \Omega)$ for some $\alpha >0$. Consequently, $ v_{\Om,g,\gamma} \ge 0$ in $\Om$. Indeed, for $x^*$ a minimum point of $ v_{\Om,g,\gamma}$ with
$ v_{\Om,g,\gamma}(x^*) <0$, we can modify $g$ slightly to find  a new function $\tilde g$, such that
$$0 \le \tilde g \le 1, \;\; \int_\Om \tilde g  < \mathfrak{C}_{-}(\Omega,\gamma), \;\;  v_{\Om,\tilde g,\gamma}(x^*) <0. $$
From the density of the characteristic functions, we can find a sequence of sets $\tilde A_\delta$ such that
$1_{\tilde A_\delta} \rightarrow \tilde g $ weakly-$\star$ in $L^\infty$, and $|\tilde A_\delta| =  \int_\Om \tilde g $. In particular, $v_{\Om,\tilde A_\delta,\gamma}(x^*) <0$. This contradicts the definition of $\mathfrak{C}_{-}(\Omega,\gamma)$.

Consequently, $v_{\Om,g,\gamma}(x^*) =0$. There are two possibilities: either $x^* \in \Om$, or $x^* \in \partial \Om$. Assume first that $x^* \in \Om$. As a consequence of the covering property, there exists a ball $B$ of radius $R$ such that $x_0 \in B\subset \Om$. In particular, this implies that
$v_{\Om,g,\gamma} \ge 0$ on $\partial B$. The maximum principle {gives}
$$v_{\Om,g,\gamma} \ge v_{B,g,\gamma}, \mbox{ on } B.$$
Consequently $v_{B,g,\gamma}(x^*) \le 0$. Clearly
\begin{equation}\label{e2.4}
\int_B g  \le \mathfrak{C}_{-}(\Omega,\gamma)< \mathfrak{C}_{-}(B_R, \gamma).
\end{equation}

{\bf Case 1.} In case $v_{B,g,\gamma}(x^*) < 0$, we immediately get a contradiction since, as above, we can build a sequence of sets $\tilde A_\delta\subset B$ such that
$1_{\tilde A_\delta} \rightarrow \tilde g\cdot 1_B $ weakly-$\star$ in $L^\infty (B)$, and $|\tilde A_\delta| =  \int_B  g $. By the uniform convergence, we get that
 $v_{B,\tilde A_\delta,\gamma}(x^*) <0$, so that $\mathfrak{C}_{-}(B_R, \gamma) \le  \int_B g $. This contradicts  \eqref{e2.4}.
 \smallskip

{\bf Case 2.} In case $v_{B,g,\gamma}(x^*) =0$, we claim that either $g$ is itself a characteristic function, or we can find another function $\tilde g$ such that
 $$0\le \tilde g \le 1,  \int_B \tilde g < \mathfrak{C}_{-}(B_R, \gamma), \mbox{ and } v_{B,\tilde g,\gamma}(x^*) <0.$$
 Assume that $g$ is a characteristic function. Then $g=1_A$. Taking a new set $A\subset \tilde A \subset B$, such that $|A|< |\tilde A| < \mathfrak{C}_{-}(B_R, \gamma)$ we get by the maximum principle that  $v_{B,\tilde A,\gamma}(x^*) <0$, in contradiction with the definition of  $\mathfrak{C}_{-}(B_R, \gamma)$.

Assume that $g$ is not a characteristic function on $B$. Then, for some value $\delta >0$ the set $U_\delta= \{ x\in B : \delta \le g(x) \le 1-\delta \}$ has positive Lebesgue measure. We put $\tilde g = g+ s 1_{U_\delta}$, where $s>0$ is small enough such that
$ \int_B   \tilde g  <\mathfrak{C}_{-}(B_R, \gamma)$. By the maximum principle, we get $v_{B,\tilde g,\gamma}(x^*) <0$. In this case, we are back to Case 1.

Assume now that $x^* \in \partial \Om$. Let ${\bf n_{x^*}}$ be the outward normal vector at $x^*$. Let $\overline x_\vps$  be  the projection on $\partial \Om$ of $x_\vps$. Since $\Om$ is of class $C^2$, we get $\overline x_\vps \rightarrow x^*$ and that there exists a point $y_\vps$ on the segment $[\overline x_\vps, x_\vps]$ such that $\nabla v_{\Om,A_\vps,\gamma}(y_\vps) \cdot {\bf n_{\overline x_\vps}} \ge 0$. Passing to the limit, we get
$$\nabla v_{\Om,g,\gamma}(x^*) \cdot {\bf n_{x^*}} \ge 0.$$
Meanwhile, $x^*$ is a minimum point of $v_{\Om,g,\gamma},$ so that
$$\frac{\partial v_{\Om,g,\gamma}}{\partial {n}}(x^*) \le 0.$$
Hence,
$$\frac{\partial v_{\Om,g,\gamma}}{\partial {n}}(x^*) =0.$$
  Using the $R$-smoothness at $x^*$, the ball $B\subset \Om$ of radius $R$ tangent  to  $\partial \Om$ at $x^*$ stays in $\Om$.  Since $
 v_{\Om,g,\gamma}\ge 0$, by the maximum principle we get $v_{\Om,g,\gamma}\ge  v_{B,g,\gamma}.$  By the Hopf maximum  principle, applied to $v_{\Om,g,\gamma}-  v_{B,g,\gamma}$ on $B$ at the minimum point $x^*\in \partial B$,  we have either that
 $$\frac{\partial v_{\Om,g,\gamma}}{\partial {n}}(x^*)  - \frac{\partial v_{B,g,\gamma}}{\partial {n}}(x^*) < 0,$$
 or that $v_{\Om,g,\gamma}-  v_{B,g,\gamma}=0$ on $B$.
In the first situation,
 $$\frac{\partial v_{B,g,\gamma}}{\partial {n}}(x^*) < 0,$$ which means that $ v_{B,g,\gamma}$ takes negative values close to $x^*$. Then, we conclude as in Case 1, above. In the second situation, if we find a point $\overline x \in \partial B \cap \Om$, we can conclude as in Case 2 since  $v_{B,g,\gamma}(\overline x) =0$. The alternative is that $\partial B \subset \partial \Om$ so that $\Om=B$, and we have a contradiction.

To prove \eqref{e00} we let $m\ge 3$, and let $H$ be an open half-space. Then
\begin{equation}\label{e6}
G_{H}(x,y)=c_m\bigg(|x-y|^{2-m}-|x^*-y|^{2-m}\bigg),
\end{equation}
where $x^*$ is the reflection of $x$ with respect to $\partial H$, and
\begin{equation*}
c_m=\frac{\Gamma((m-2)/2)}{4\pi^{m/2}}.
\end{equation*}
By \eqref{ee3}, and monotonicity we have that
\begin{align}\label{e9}
v_{B_R,A,\gamma}(x) \ge \gamma v_{B_R}(x)-\int_{A}dy\,G_{H_{\bar x}}(x,y),
\end{align}
where $H_{\bar x}$ is the half-space tangent to $B_R$ at $\bar x\in \partial B_R$.
Note that $|x^*-\bar x|=|\bar x-x|$. Moreover, $|x-y|\le |x^*-y|,\, y\in \Omega$. Hence,
\begin{equation}\label{e10}
0\le |x-y|^{2-m}-|x^*-y|^{2-m}\le (m-2)|x-x^*||x-y|^{1-m}.
\end{equation}
Let
\begin{equation*}
A_x^*=\{y:|y-x|<r_A\},
\end{equation*}
where
\begin{equation}\label{e12}
\omega_mr_A^m=|A|.
\end{equation}
By \eqref{e6}, \eqref{e9}, \eqref{e10}, and radial rearrangement of $A$ about $x$, we {have}
\begin{align}\label{e13}
\int_{A}dy\,G_{B_R}(x,y)&\le (m-2)c_m|x-x^*|\int_{A}dy\,|x-y|^{1-m}\nonumber \\ &
\le (m-2)c_m|x-x^*|\int_{A_x^*}dy\,|x-y|^{1-m}\nonumber \\ &=(m-2)mc_m\omega_mr_A|x-x^*|\nonumber \\ &
=2r_A|x-\bar x|.
\end{align}

The following will be used in the proof of \eqref{e00}, and in the proof of \eqref{ef9} and \eqref{ef10} in Remark 6 below.
\begin{lemma}\label{lem1}
If $\Omega$ is an open set in $\R^m, m\ge 2$ with $R$-smooth boundary, and if
if $\lambda(\Omega)>0$, then
\begin{equation}\label{e14}
v_{\Omega}(x)\ge \frac{|x-\bar x|R}{2m}.
\end{equation}
\end{lemma}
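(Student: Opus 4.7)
The plan is to choose an explicit ball $B \sq \Om$ containing $x$, invoke the domain monotonicity $v_\Om \ge v_B$ on $B$ (a direct consequence of the maximum principle, or equivalently of the Green's function monotonicity \eqref{ee2.1} integrated against $\mathbf 1_B$), and then exploit the closed-form expression
$$v_{B_\rho(y)}(z)=\frac{\rho^2-|z-y|^2}{2m}.$$
Set $d=|x-\bar x|=\textup{dist}(x,\partial\Om)$; the claim is trivial if $d=0$, so assume $d>0$.

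Split into two cases according to whether $d$ exceeds the smoothness radius $R$. If $d\ge R$, then by the very definition of $d$ the ball $B_d(x)$ is contained in $\Om$, so monotonicity gives
$$v_\Om(x)\ge v_{B_d(x)}(x)=\frac{d^2}{2m}\ge\frac{dR}{2m}.$$
If $0<d\le R$, use the $R$-smoothness of $\partial\Om$ to produce an interior tangent ball $B_R(x_1)\sq\Om$ and an exterior tangent ball $B_R(x_2)\sq\Om^c$ at $\bar x$, with $x_1=\bar x+R\mathbf n$ and $x_2=\bar x-R\mathbf n$ for the inward unit normal $\mathbf n$ at $\bar x$. The key geometric step is to show that $x$ lies on the inward-normal segment $[\bar x,x_1]$, i.e.\ $x=\bar x+d\mathbf n$, and hence $|x-x_1|=R-d$. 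Granting this,
$$v_\Om(x)\ge v_{B_R(x_1)}(x)=\frac{R^2-(R-d)^2}{2m}=\frac{d(2R-d)}{2m}\ge\frac{dR}{2m}.$$

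The normal-direction identity is where the main obstacle lies. To establish it, write $x=\bar x+d\mathbf v$ for some unit vector $\mathbf v$. Since $\Om$ is open and $B_R(x_2)\sq\Om^c$, one has $\textup{dist}(x,\Om^c)=d$ and $\bar B_R(x_2)\sq\overline{\Om^c}$, which forces $|x-x_2|\ge R+d$. Expanding
$$|x-x_2|^2=d^2+2dR\,\mathbf v\cdot\mathbf n+R^2\ge(R+d)^2$$
yields $\mathbf v\cdot\mathbf n\ge 1$, and since $\mathbf v,\mathbf n$ are unit vectors this forces $\mathbf v=\mathbf n$. This is essentially the standard fact that $R$-smooth boundaries have reach at least $R$, and the unique nearest-point projection lies along the inward normal on the tubular neighbourhood $\{d<R\}$. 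Once this geometric identity is in hand, the lemma reduces to the two elementary ball comparisons displayed above, uniformly in whether $d\le R$ or $d\ge R$.
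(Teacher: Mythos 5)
Your proof is correct and takes essentially the same approach as the paper: domain monotonicity of the torsion function against an explicit ball $B\subset\Om$ containing $x$, split into the two cases $d\ge R$ (use $B_d(x)$) and $d\le R$ (use the interior tangent ball of radius $R$ at $\bar x$), with the same closed-form evaluation $v_{B_\rho(y)}(z)=\frac{\rho^2-|z-y|^2}{2m}$. The only difference is that you spell out the geometric fact that $x$ lies on the inward normal segment at $\bar x$ (hence $|x-c_x|=R-d$), which the paper's proof of the case $|x-\bar x|\le R$ uses implicitly when writing $R-|x-c_x|=|x-\bar x|$.
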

\begin{proof}
Recall that
\begin{equation*}
v_{  B_r(c) }(x)=\frac{r^2-|x-c|^2}{2m}.
\end{equation*}
We first consider the case $|x-\bar x|>R$. Then, by domain monotonicity of the torsion function, and \eqref{e13}
\begin{equation*}
v_{\Omega}(x)\ge v_{ B_{|x-\bar x|}(x) }(x)=\frac{|x-\bar x|^2}{2m}\ge \frac{|x-\bar x|R}{2m}.
\end{equation*}
We next consider the case $|x-\bar x|\le R.$ Since $\partial\Omega$ is $R$-smooth, there exists  $  B_R(c_x) \subset \Omega$ such that $|c_x-\bar x|=R$.
Hence, by \eqref{e14},
\begin{equation*}
v_{\Omega}(x)\ge v_{  B_R(c_x) }(x)=\frac{R^2-|x-c_x|^2}{2m}\ge \frac{(R-|x-c_x|)R}{2m}=\frac{|x-\bar x|R}{2m}.
\end{equation*}
In either case we conclude \eqref{e14}.
\end{proof}
By \eqref{e13} and \eqref{e14} we have that
\begin{equation}\label{e17}
v_{B_R,A,\gamma}(x)\ge\gamma \frac{|x-\bar x|R}{2m}-2r_A|x-\bar x|.
\end{equation}
The right-hand side of \eqref{e17} is non-negative for $r_A\le \gamma R/(4m)$. This is, by \eqref{e12}, equivalent to \eqref{e00}.

Consider the case $m=2$. Then
\begin{equation*}
G_{H}(x,y)=\frac{1}{2\pi}\log\bigg(\frac{|x^*-y|}{|x-y|}\bigg).
\end{equation*}
By the triangle inequality,
\begin{equation*}
G_{B_R}(x,y)\le G_{H_{\bar x}}(x,y)\le\frac{1}{2\pi}\log\bigg(\frac{|x^*-x|+|x-y|}{|x-y|}\bigg)\le \frac{|x^*-x|}{2\pi|x-y|} .
\end{equation*}
Hence we have that
\begin{equation*}
\int_{A}dy\,G_{B_R}(x,y)\le (2\pi)^{-1}|x-x^*|\int_{A_x^*}dy\,|x-y|^{-1}=2r_A|x-\bar x|.
\end{equation*}
The remaining arguments follow those of the case $m\ge 3$, as the right-hand side above equals the right-hand side of \eqref{e13}.

To prove \eqref{ex1} we let $m\ge 3$. By scaling it suffices to prove \eqref{ex1} for $R=1$. Let $a\in (0,1)$. We obtain an upper bound for $a$ such that
$v_{B_1,B_a,\gamma}(0)<0.$ Note that
\begin{equation}\label{ex3}
G_{B_1}(0,y)=\frac{\Gamma((m-2)/2)}{4\pi^{m/2}}\big(|y|^{2-m}-1\big).
\end{equation}
Hence, by \eqref{ex3} we have that
\begin{align}\label{ex4}
v_{B_1,B_a,\gamma}(0)&=\gamma v_{B_1}(0)-\int_{B_a}dy\,G_{B_1}(0,y)\nonumber \\ &
=\frac{\gamma}{2m}-\frac{\Gamma((m-2)/2)}{4\pi^{m/2}}m\omega_m\int_{[0,a]}dr\big(r-r^{m-1}\big)\nonumber \\ &
\le \frac{\gamma}{2m}-\frac{a^2}{2m}.
\end{align}
The right-hand side of \eqref{ex4} is negative for $a>\gamma^{1/2}$. This implies \eqref{ex1}.

To prove \eqref{ex2} we let $m=2,\, a\in (0,1)$, and note that
\begin{equation*}
G_{B_1}(0,y)=-\frac{1}{2\pi}\log |y|.
\end{equation*}
Hence,
\begin{align}\label{ex6}
v_{B_1,B_a,\gamma}(0)&=\frac{\gamma}{4}+\int_{[0,a]}dr\,r\log r \nonumber \\ &
=\frac{\gamma}{4}-\frac{a^2}{4}+\frac{a^2}{4}\log a^2.
\end{align}
Let
\begin{equation}\label{ex7}
a=\Big(1+\log \Big(\frac{1}{\gamma}\Big)\Big)^{-1/2} \gamma^{1/2}.
\end{equation}
Then $a\in (0,1)$, and by \eqref{ex6} and \eqref{ex7},
\begin{equation*}
 v_{B_1,B_a,\gamma}(0)\le -\frac{\gamma}{4}\frac{\log\Big(1+\log\Big(\frac{1}{\gamma}\Big)\Big)}{1+\log\Big(\frac{1}{\gamma}\Big)}<0.
\end{equation*}
This implies \eqref{ex2}.
\hspace*{\fill }$\square $

\section{Proof of Theorem \ref{the4} \label{sec4}}

{\it Proof of Theorem \textup{\ref{the4}}.}
The proof of Theorem \ref{the4} relies on some basic facts on the connection between torsion function, Green function, and heat kernel. These have been exploited elsewhere in the literature. See for example \cite{MvdBB}.
We recall that (see \cite{EBD3}, \cite{GB}, \cite{GB1}) the heat equation
\begin{equation*}
  \Delta u=\frac{\partial u}{\partial t}\,  \textup{on}\,\Om\times \R^+,
\end{equation*}
has a unique, minimal, positive fundamental solution $p_{\Omega}(x,y;t),$
where $x\in \Omega$, $y\in \Omega$, $t>0$. This solution, the
heat kernel for $\Omega$, is symmetric in $x,y$, strictly positive,
jointly smooth in $x,y\in \Omega$ and $t>0$, and it satisfies the
semigroup property
\begin{equation*}
p_{\Omega}(x,y;s+t)=\int_{\Omega}dz\ p_{\Omega}(x,z;s)p_{\Omega}(z,y;t),
\end{equation*}
for all $x,y\in {\Omega}$ and $t,s>0$. If $\Omega$ is an open subset of $\R^m$, then, by minimality,
\begin{equation}\label{e34}
p_{\Omega}(x,y;t)\le p_{\R^m}(x,y;t)=(4\pi t)^{-m/2}e^{-|x-y|^2/(4t)},\, x\in \Omega,\ y\in  \Omega, \ t>0.
\end{equation}
It is a standard fact that for $\Omega$ open in $\R^m$,
\begin{equation}\label{e35}
G_{\Omega}(x,y)=\int_{[0,\infty)}dt\,p_{\Omega}(x,y;t),
\end{equation}
whenever the integral with respect to $t$ converges.
We {have}
\begin{equation*}
    v_{\Omega}(x)=\int_{[0,\infty)}dt\,\int_{\Omega}dy\, p_{\Omega}(x,y;t).
\end{equation*}
By the heat semigroup property, we have that for $x\in \Omega,y\in \Omega, t>0,$
\begin{align}\label{e37}
 p_{\Omega}(x,y;t)&=\int_{\Omega}dr\, p_{\Omega}(x,r;t/2)p_{\Omega}(r,y;t/2)\nonumber \\ &
 \le\bigg(\int_{\Omega}dr\, \big(p_{\Omega}(x,r;t/2)\big)^2\Bigg)^{1/2}\bigg(\int_{\Omega}dr\, \big(p_{\Omega}(r,y;t/2)\big)^2\Bigg)^{1/2}\nonumber \\&
 =\big(p_{\Omega}(x,x;t)p_{\Omega}(y,y;t)\big)^{1/2}.
\end{align}
Furthermore, for all $s\in (0,t),$
\begin{equation}\label{e38}
 p_{\Omega}(z,z;t)\le e^{-s\lambda(\Omega)} p_{\Omega}(z,z;t-s).
\end{equation}
So choosing $s=t/2$ in \eqref{e38}, and subsequently using \eqref{e37} gives that
\begin{equation}\label{e39}
 p_{\Omega}(x,y;t)\le e^{-t\lambda(\Omega)/3}\big(p_{\Omega}(x,x;t/2)p_{\Omega}(y,y;t/2)\big)^{1/3}p_{\Om}(x,y;t)^{1/3}.
\end{equation}
By \eqref{e34}, both diagonal heat kernels in the right-hand side of \eqref{e39} are bounded
by $(2\pi t)^{-m/2}$, and $p_{\Om}(x,y;t)^{1/3}\le (4\pi t)^{-m/6}e^{-|x-y|^2/(12t)}.$ Hence by \eqref{e39},
\begin{align}\label{e40}
&p_{\Omega}(x,y;t)\le 2^{m/3}(4\pi t)^{-m/2}e^{-t\lambda(\Omega)/3-|x-y|^2/(12t)}\nonumber \\ &
\le 2^{m/3}\sup_{t>0}\big(e^{-t\lambda(\Om)/6-|x-y|^2/(24t)}\big)e^{-t\lambda(\Om)/6}(4\pi t)^{-m/2}e^{-|x-y|^2/(24t)}\nonumber \\ &
=2^{m/3}e^{-t\lambda(\Om)/6-|x-y|\lambda(\Omega)^{1/2}/6}(4\pi t)^{-m/2}e^{-|x-y|^2/(24t)}.
\end{align}

Let $c>0$, and let ${r_1}$ be the radius of a
ball of volume $\frac{c}{2}$ and ${r_2} =2^\frac 1m r_1$ be the radius of a ball of volume $c$. Following the result of Lieb \cite[Theorem 1]{Li83}, there exists a translation $x$ of  $B_{r_1}$  such that
$$\lb(\Om) + \lb (B_{r_1})\ge \lb(\Om\cap B_{r_1}(x)).$$

The Kohler-Jobin inequality asserts that  (see for instance \cite{Br14}) there exists $C_2(m)>0$ such that for every open set $\Om$,
\begin{equation}\label{ea1}
\lb(\Om) T (\Om)^\frac{2}{m+2} \ge C_2(m).
\end{equation}
This, together with the Lieb inequality, implies
\begin{equation}\label{ea3}T(A')=\int_{ \Om\cap B_{r_1}(x)} v_{ \Om\cap B_{r_1}( x )} \ge \bigg(\frac{C_2(m)}{\lambda(\Omega\cap B_{r_1}(x))}\bigg)^\frac{m+2}{2}\ge \bigg ( \frac{C_2(m)}{ \lb(\Om) + \lb (B_{r_1})} \bigg )^\frac{m+2}{2},
\end{equation}
where $A' = B_{r_1}(x) \cap \Om$. We put $A=B_{r_2}(x)\cap\Om$.

We estimate the  integral  of $v_{\Om,A,\gamma} $ on the set $A'$ as follows:
\begin{align}\label{ea0}
\int_{A'} v_{\Om, A,\gamma} &= \int_{A' }\,dx \bigg ( \int _\Om\,dy\, G_\Om(x,y) (\gamma{\bf 1}_{\Om \sm A}(y) -(1-\gamma){\bf 1}_A(y))\bigg) \nonumber \\ &
 =\gamma\int_{A' }\,dx  \int _{\Om \sm A}dy\, G_\Om(x,y) -  (1-\gamma)\int_{A' }\,dx  \int _{A }dy\, G_\Om(x,y).
\end{align}
By monotonicity, we have that
\begin{equation}\label{ea4}
(1-\gamma)\int_{A' }dx\,  \int _{A }dy\, G_\Om(x,y)1_A(y) \ge (1-\gamma)\int_{A' } v_{A'}= (1-\gamma)T(A').
\end{equation}
For all $x\in A'$, and for all $y\in \Om \sm A$ we have that
$$|x-y|\ge \frac {1}{2m}\big (c/\omega_m\big)^{1/m} .$$
By \eqref{e35} and \eqref{e40}, and the preceding inequality,
\begin{align}\label{ea8}
 &\gamma\int_{A'}\, dx  \int _{\Om \sm A} dy\, G_{\Om}(x,y)\nonumber \\ & \le\gamma2^{m/3}6^{m/2}\int_{[0,\infty)}dt\int_{A' }\, dx  \int _{\Om \sm A} dy\,e^{-t\lambda(\Om)/6-|x-y|\lambda(\Omega)^{1/2}/6}p_{\R^m}(x,y;6t)\nonumber \\ &\le \gamma 2^{m/3}6^{m/2}e^{-(c/\omega_m)^{1/m}\lambda(\Omega)^{1/2}/(12m)}\nonumber \\ &\hspace{30mm}\times\int_{[0,\infty)}dte^{-t\lambda(\Om)/6}\int_{A' }\, dx  \int _{\Om \sm A} dy\,p_{\R^m}(x,y;6t)\nonumber \\ &\le \gamma 2^{m/3}6^{m/2}e^{-(c/\omega_m)^{1/m}\lambda(\Omega)^{1/2}/(12m)}\nonumber\\&\hspace{30mm}\times\int_{[0,\infty)}dte^{-t\lambda(\Om)/6}\int_{A'}\, dx  \int _{\R^m} dy\,p_{\R^m}(x,y;6t)\nonumber \\ &=\gamma 2^{m/3}6^{(m+2)/2}e^{-(c/\omega_m)^{1/m}\lambda(\Omega)^{1/2}/(12m)}|A'|\lambda(\Om)^{-1}\nonumber \\ &\le \gamma 2^{5m/6}3^{(m+2)/2}e^{-(c/\omega_m)^{1/m}\lambda(\Omega)^{1/2}/(12m)}c\lambda(\Om)^{-1}.
\end{align}
By \eqref{ea3}, \eqref{ea4}, \eqref{ea0}, and \eqref{ea8}, we find
\begin{align}\label{ea2}
\int_{A'} v_{\Om,A,\gamma}
\le  \gamma 2^{5m/6}3^{(m+2)/2}&e^{-(c/\omega_m)^{1/m}\lambda(\Omega)^{1/2}/(12m)}c\lambda(\Om)^{-1}\nonumber \\ & - (1-\gamma)\Big ( \frac{C_2(m)}{ \lb(\Om) + \lb (B_{r_1})} \Big )^\frac{m+2}{2}.
\end{align}
In order to bound the right-hand side of \eqref{ea2} from above, we have
\begin{align}\label{ea5}
&\Big ( \frac{C_2(m)}{ \lb(\Om) + \lb (B_{r_1})} \Big )^\frac{m+2}{2}\ge \bigg(\frac{C_2(m)^{1/2}}{\lb(\Om)^{1/2}+\lb (B_{r_1})^{1/2}}\bigg)^{m+2}\nonumber \\ & \hspace {25mm}
=\bigg(\frac{c}{\omega_m}\bigg)^{(m+2)/m}\bigg(\frac{C_2(m)^{1/2}}{\lb(\Om)^{1/2}(c/\omega_m)^{1/m}+2^{1/m}\lb (B_{1})^{1/2}}\bigg)^{m+2},
\end{align}
where we have used the scaling $\lb (B_{r_1})=r_1^{-2}\lb (B_{1})$.

In order to bound the first term in the right-hand side of \eqref{ea5} from above we use the inequality $e^{-x}\le \frac{((m+2)/e)^{m+2}}{x^{m+2}},\, x> 0$. We have
\begin{align}\label{ea6}
&e^{-(c/\omega_m)^{1/m}\lambda(\Omega)^{1/2}/(12m)}\nonumber \\ &=e^{2^{1/m}\lb (B_{1})^{1/2}/(12m)}e^{-((c/\omega_m)^{1/m}\lambda(\Omega)^{1/2}+2^{1/m}\lb (B_{1})^{1/2})/(12m)}\nonumber \\ &
\le e^{2^{1/m}\lb (B_{1})^{1/2}/(12m)}\nonumber \\ &\hspace{10mm}\times(12m(m+2)/e)^{m+2}((c/\omega_m)^{1/m}\lambda(\Omega)^{1/2}+2^{1/m}\lb (B_{1})^{1/2})^{-(m+2)}.
\end{align}
By \eqref{ea5} and \eqref{ea6}, we obtain that the right-hand side of \eqref{ea2} is bounded from above by $0$, provided
\begin{align*}
c\ge C_1(m)\bigg(\frac{\gamma}{1-\gamma}\bigg)^{m/2}\lambda(\Om)^{-m/2},
\end{align*}
with $C_1(m)$ given by \eqref{ee11.0}.  This implies the bound for $C_{-}(\Om,\gamma)$ in \eqref{ee12}.

\hspace*{\fill }$\square $

\section{Proof of Theorem \ref{the4.1} \label{sec4.1}}
We start with the following.
\begin{lemma}\label{lem4.1a}
There exists $\vps=\vps(m, \gamma) >0$ such that for every open set $\Om \sq \R^m$ with finite torsion and for every $x_0 \in \Om$ the following holds
$$ \mbox{if } v_\Om (x) \le \vps \mbox { for a.e. } x \in B_1(x_0) \mbox{ then } v_{\Om, B_1(x_0), \gamma} (x_0) \le 0.$$
\end{lemma}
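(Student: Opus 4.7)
The plan is to argue by the maximum principle on $\Om':=B_1(x_0)\cap\Om$. Set $w:=v_{\Om,B_1(x_0),\gamma}$. By formula \eqref{ee3}, $w=\gamma v_\Om-u$, where $u(x):=\int_{\Om\cap B_1(x_0)}G_\Om(x,y)\,dy\ge 0$. On $\Om'$ the right-hand side of the defining PDE equals $-(1-\gamma)$, so $\Delta w=1-\gamma$ there. The key move is to subtract the explicit paraboloid
$$\phi(x):=\frac{(1-\gamma)|x-x_0|^2}{2m},$$
which satisfies $\Delta\phi=1-\gamma$ and $\phi(x_0)=0$. Consequently $w-\phi$ is harmonic on $\Om'$, and $(w-\phi)(x_0)=w(x_0)$.

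The next step is to bound the boundary values of $w-\phi$ on $\partial\Om'$, which splits into the pieces $\partial\Om\cap\overline{B_1(x_0)}$ and $\partial B_1(x_0)\cap\Om$. On the first piece, $w=0$ in the trace sense while $\phi\ge 0$, so $w-\phi\le 0$. On the second, $|x-x_0|=1$ makes $\phi$ equal to the constant $(1-\gamma)/(2m)$; moreover, interior continuity of $v_\Om$ (from elliptic regularity) upgrades the a.e.\ hypothesis to the pointwise bound $v_\Om\le\vps$ on $\overline{B_1(x_0)}\cap\Om$, and together with $u\ge 0$ this yields $w\le\gamma v_\Om\le\gamma\vps$. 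Hence $w-\phi\le\gamma\vps-(1-\gamma)/(2m)$ on this piece.

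Applying the (weak) maximum principle to the harmonic function $w-\phi$ on the bounded open set $\Om'$ then gives
$$w(x_0)\le\max\Bigl\{\,0,\;\gamma\vps-\frac{1-\gamma}{2m}\Bigr\}.$$
Choosing $\vps(m,\gamma):=\frac{1-\gamma}{2m\gamma}$ makes the right-hand side vanish, so $v_{\Om,B_1(x_0),\gamma}(x_0)=w(x_0)\le 0$, as required. No genuine obstacle is expected: the argument sidesteps any quantitative Green-function estimate by using the paraboloid $\phi$ to absorb the interior source $-(1-\gamma)$, reducing everything to the elementary sphere bound $v_\Om\le\vps$ on $\partial B_1(x_0)\cap\Om$. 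The only mildly technical point is the use of the weak form of the maximum principle on the possibly irregular piece $\partial\Om\cap\overline{B_1(x_0)}$, which is justified by $w\in H^1_0(\Om)$ having zero trace on $\partial\Om$.
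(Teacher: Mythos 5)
Your proof is correct, and it takes a genuinely different and arguably cleaner route than the paper. The paper works with the radial function $M(r)=\sup_{\partial B_r(x_0)} v_{\Om,B_1(x_0),\gamma}$, establishes the differential inequality $M''+\frac{m-1}{r}M'\ge 1-\gamma$ in the viscosity sense, and compares $M$ against solutions of the corresponding radial ODE, after first reducing to smooth bounded $\Om$ by approximation. Your subtraction of the paraboloid $\phi(x)=\frac{(1-\gamma)|x-x_0|^2}{2m}$ is morally the same comparison (the paraboloid is precisely the radial particular solution of that ODE), but it lets you apply the weak maximum principle directly to the harmonic function $w-\phi$ on $\Om'=B_1(x_0)\cap\Om$, avoiding both the viscosity framework and the reduction-to-smooth-domains step. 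Both routes produce the same constant $\vps=\frac{1-\gamma}{2m\gamma}$.

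The one point worth spelling out more precisely is the ``mildly technical point'' you flag. Since $\Om$ is only open, the boundary piece $\partial\Om\cap\overline{B_1(x_0)}$ can be irregular, so ``$w$ has zero trace'' is not by itself a licence to invoke the classical boundary-value form of the maximum principle on $\Om'$. The clean fix is the standard truncation: for $\rho>0$ one checks that $(w-\phi-\rho)^+\in H^1_0(\Om')$ — it lies in $H^1_0(\Om)$ because $\phi+\rho\ge\rho>0$ and $w\in H^1_0(\Om)$, and its support is contained in $\Om\cap B_{s_\rho}(x_0)$ with $s_\rho<1$ because $w\le\gamma\vps=\frac{1-\gamma}{2m}$ forces $|x-x_0|^2<1-\frac{2m\rho}{1-\gamma}$ wherever $w>\phi+\rho$. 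Testing the weak harmonicity of $w-\phi$ against this function and letting $\rho\to 0$ gives $w\le\phi$ on $\Om'$, hence $w(x_0)\le\phi(x_0)=0$. With that justification in place, the argument is complete and works directly for any open $\Om$ of finite torsion.
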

  Note that a consequence of the lemma above,   for every $\delta >0$
\begin{equation}\label{eq4.1a}
\mbox{if } v_\Om (x) \le \vps \mbox { for a.e. } x \in B_{1+\delta}(x_0) \mbox{ then } v_{\Om, B_{1+\delta}(x_0), \gamma} \le 0 \mbox{ on } B_\delta(x_0).
\end{equation}
\begin{proof}
Assume  for the moment that $\Om$ is  bounded and smooth. Let $x_0 \in \Om$ such that
$$v_\Om (x) \le \vps \mbox { for a.e. } x \in B_1(x_0),$$ for some value $\vps >0$ that will be specified later in the proof.
We observe that $ v_{\Om, B_1(x_0), \gamma}$ is Lipschitz so that for every $r \in (0,1)$ one can define
$$M(r):= \sup_{x \in \partial B_r(x_0)}  v_{\Om, B_1(x_0), \gamma}(x).$$
The function $ M:(0,1)\to \R$ is Lipschitz and bounded from above by $\vps$. If there exists some $r \in (0,1)$ such that $M(r)=0$ then the assertion of the theorem is proved since one gets by the maximum principle that $v_{\Om, B_1(x_0), \gamma } \le 0$ on $B_r(x_0)$. So we can assume that $M >0$ on $(0,1)$. Then, the   supremum above is achieved at  a point $x_r \in \partial B_r(x_0) \cap \Om$.

Moreover,
$$M''(r)+\frac{m-1}{r} M'(r) \ge 1-\gamma,$$
in the viscosity sense on $(0, 1)$.  For every $0<\vps<R\le 1$ we introduce the equation
$$\phi_{\vps, R}''(r)+\frac{m-1}{r} \phi_{\vps, R}'(r) =  1-\gamma, \mbox{ \rm on } (\vps,R), \; \phi_{\vps, R}(\vps)=M(\vps), \phi_{\vps, R}(R)=M(R).$$
By the comparison principle (see for instance \cite[Theorem 1.1]{Tr88}) we get that $M\le \phi_{\vps, R}$ on $(R, d)$. In particular, this implies that $\phi$ is   non-negative. If $M$ is differentiable at $R$, then  $\phi_{\vps, R}'(R) \le M'(R)$.

  Multiplying the equation for $\phi_{\vps, R}$ by $r^{m-1}$ and integrating between   $r$ and $R$ gives
$$R^{m-1} \phi_{\vps, R}'(R) -r^{m-1} \phi_{\vps, R}'(r)= (1-\gamma) \Big ( \frac{R^m}{m} - \frac{r^m}{m}\big).$$
  Dividing by $r^{m-1}$ and integrating over $(\vps, R)$ yields
\begin{align*}R^{m-1} \phi_{\vps, R}'(R)& \int_\vps ^R \frac{1}{r^{m-1}} dr -(M(R)-M(\vps))\nonumber \\ &
=(1-\gamma) \frac{R^m}{m}  \int_\vps ^R \frac{1}{r^{m-1}} dr  -\frac{1-\gamma}{2m} (R^2-\vps ^2).
\end{align*}
Since $M$ is Lipschitz and $\lim_{\vps \to 0} \int_\vps ^R \frac{1}{r^{m-1}} dr=+\infty$, we get
$$\lim_{\vps \to 0}\phi_{\vps, R}'(R) = (1-\gamma) \frac Rm.$$
Finally,
$$M'(R) \ge (1-\gamma) \frac Rm.$$
  Integrating over $(0,1)$ gives
$$M(1)-M(0) \ge \frac{1-\gamma}{2m}.$$
  Since   $M \ge 0$,
$$M(1) \ge \frac{1-\gamma}{2m}.$$
  Taking into account that $M \le \gamma v_\Om$, and putting
$$\vps :=  \frac{1-\gamma}{2m\gamma},$$
concludes   the proof.

Assume now that $\Om$ is open and with finite torsion. Assume that $x_0 \in \Om$ is such that
$$ v_\Om (x) \le \vps \mbox { for a.e. } x \in B_1(x_0).$$
Let $(\Om_n)_n$ be an increasing sequence of open, smooth sets such that $\Om=\cup_n \Om_n$.   For all $n$ sufficiently large,   $x_0\in \Om_n$. Moreover, by the maximum principle,
$$v_{\Om_n} (x) \le \vps \mbox { for a.e. } x \in B_1(x_0).$$
Then $v_{\Om_n, B_1(x_0), \gamma} (x_0)\le 0$.   At   the same time, $v_{\Om_n, B_1(x_0), \gamma}$ converges to $v_{\Om, B_1(x_0), \gamma}$ uniformly on any compact contained in $\Om\cap B_1(x_0)$.   Hence
 $v_{\Om_n, B_1(x_0), \gamma}(x_0)\le 0$.
\end{proof}
\noindent{\it Proof of Theorem \textup{\ref{the4.1}}.} Let $\Om$ be open, connected and with finite torsion. If
$\mathfrak{C}_{-}(\Omega,\gamma)=0$, then inequality \eqref{ee11.1} is satisfied. Assume
$\mathfrak{C}_{-}(\Omega,\gamma)>0$. Then, for every $\delta >0$, there exists
$t>0$ such that
\begin{equation}\label{eee1}
\mathfrak{C}_{-}(t\Omega,\gamma)=(1+ \delta) |B_1|.
\end{equation}
  By \eqref{ee9},
\begin{equation}\label{eq4.1f}
t= \Big (\frac{(1+\delta)|B_1|}{\mathfrak{C}_{-}(\Omega,\gamma)}\Big)^\frac 1m.
\end{equation}
If there exists $x_0 \in t\Om$ such that
$v_{t\Om} \le \vps$ on $B_1(x_0)$, then by Lemma \ref{lem4.1a} we get
$v_{\Om, B_1(x_0), \gamma} (x_0)\le 0$, so that $\mathfrak{C}_{-}(t\Omega,\gamma)\le  |B_1|$, in contradiction with our choice. Consequently, for every $x _0 \in t\Om$, $\sup_{B_1(x_0)}  v_{t\Om}>\frac{1-\gamma}{2m\gamma}$. This inequality leads to a relationship between $T(t\Om)$ and $\textup{diam} (t\Om)$.

Indeed, if for some $y \in t\Om$, $v_{t\Om} (y) > \frac{1-\gamma}{2m\gamma}$, then for every $r >0$
$$\int_{B_r(y)} v_{t\Om}(x) dx \ge r^m |B_1| \Big (\frac{1-\gamma}{2m\gamma} -  \frac{r^2}{2(m+2)}\Big ).$$
This follows from the fact that
$x\mapsto v_{t\Om}(x) + \frac{|x-y|^2}{2m}$ is subharmonic on $\R^m$.   We have extended $v_{\Om}$ to all of $\R^m$ by putting $v_{\Om}(x)=0$ on $\R^m\setminus \Om.$

Choosing $r$ such that
\begin{equation}\label{eq4.10}
\frac{r^2}{2(m+2)}= \frac{1-\gamma}{4m\gamma},
\end{equation}
we get
$$\int_{B_r(y)} v_{t\Om}(x) dx \ge \frac{(m+2)^{m/2}}{2^{(m+4)/2}m^{(m+2)/2}} \Big (\frac{1-\gamma}{\gamma}\Big )^{(m+2)/2}|B_1|.$$
Assume that $N$ is an integer such that
$$N(2r+2) \le \textup{diam} (t\Om) \le (N+1)(2r+2).$$
Then,
$$T(t\Om) \ge N \frac{(m+2)^{m/2}}{2^{(m+4)/2}m^{(m+2)/2}} \Big (\frac{1-\gamma}{\gamma}\Big )^{(m+2)/2}|B_1|.$$
  If $N\ge 1$, then   using the inequality $N +1 \le 2N$ we get
\begin{equation}\label{eq4.1b}
\textup{diam}(t \Om)\le 2(2r+2) \Big (  \frac{(m+2)^{m/2}}{2^{(m+4)/2}m^{(m+2)/2}} \Big (\frac{1-\gamma}{\gamma}\Big )^{(m+2)/2}|B_1|\Big)^{-1} T(t\Om)
  \end{equation}
  If $N=0$, then  we  observe that $\textup{diam} (t\Om) \le 2r+2$.   Inequality   \eqref{ee12} (which follows from Theorem \ref{the4}) gives
\begin{equation*}
 \mathfrak{C}_{-}(t\Omega,\gamma) \le  C_1(m) C_2(m)^{-m/2}\bigg(\frac{\gamma}{1-\gamma}\bigg)^{m/2}T(t\Om)^{m/(m+2)}.
   \end{equation*}
By \eqref{eee1},
\begin{equation*}
(1+\delta)|B_1| \le  C_1(m) C_2(m)^{-m/2}\bigg(\frac{\gamma}{1-\gamma}\bigg)^{m/2} T(t\Om)^{m/(m+2)}.
  \end{equation*}
Finally,
\begin{equation}\label{eq4.1e}
\textup{diam} (t\Om) \le 2r+2 \le (2r+2) \Big (  \frac{ C_1(m)C_2(m)^{-m/2}}{(1+\delta)|B_1|}   \Big) ^{\frac{m+2}{m}}\bigg(\frac{\gamma}{1-\gamma}\bigg)^{\frac{m+2}{2}} T(t\Om).
  \end{equation}
 We observe that the $\gamma$-dependence in both \eqref{eq4.1b} and \eqref{eq4.1e} is the same. Taking the larger of the two $m$-dependant constants  which show up in front of $T(t\Om)$ in \eqref{eq4.1b} and \eqref{eq4.1e}, replacing $t$ from \eqref{eq4.1f}, and letting $\delta \to 0$, and using \eqref{eq4.10} concludes the proof.
\hspace*{\fill }$\square $

\section{Proof of Theorem \ref{the5}\label{sec5}}
The proof of Theorem \ref{the5} requires the extension of the constant $\mathfrak{C}_{-}(\Omega,\gamma)$   to   quasi-open sets.
A proper introduction to the Laplace equation on quasi-open sets, capacity theory, and gamma convergence can be found in \cite[Chapter 2]{HKM93} and \cite{BB05}. We prefer, for expository reasons, to avoid   an extensive   introduction to this topic, and refer the interested reader to \cite[Sections 4.1 and 4.3]{BB05} where all terminology used below can be found.

The key observation is that the class of quasi-open sets is the largest class of sets where the Dirichlet-Laplacian problem is well defined in the Sobolev space $H^1_0$, and satisfies a strong maximum principle (see \cite{FU72}). Of course any open set is also quasi-open. Although the reader may only be interested in open sets, we are forced to work with quasi-open ones since the crucial step of the proof is the existence of a quasi-open set $\Om^*$ which maximises the left-hand side of \eqref{ee11.a}.

 The  strategy of the proof is  as follows. We analyse the shape optimisation problem
\begin{equation}\label{ee11.aq}
 \sup \bigg\{\frac{\mathfrak{C}_{-}(\Omega,\gamma)}{|\Om|} : \Om \sq \R^m, \mbox{ $\Om$  quasi-open with {$0<|\Om|<\infty$}}\bigg\},
\end{equation}
and prove in Step 1 below the existence of a maximiser $\Om^*$. Denoting $C'(m,\gamma)=\mathfrak{C}_{-}(\Omega^*,\gamma)/|\Om^*|$ we  then prove in Step 2  that  $C'(m,\gamma) < \gamma$ by a direct estimate on $\Om^*$.

We start with the following observation. Assume that $(\Om_n)_n$ is a sequence of quasi-open sets of $\R^m$, $|\Om_n|\le 1$, such that $v_{\Om_n}$ converges strongly in $L^2(\R^m)$, and pointwise almost everywhere to some function $v$. Let us denote $\Om:=\{v >0\}$.   We then {have}
\begin{equation}\label{ee10.01}
\mathfrak{C}_{-}(\Omega,\gamma) \ge \limsup _{n \rightarrow +\infty} \mathfrak{C}_{-}(\Omega_n,\gamma).
\end{equation}

Indeed, in order to prove this assertion let us consider a set $A \sq \Om$ such that ${\rm essinf} \; v_{\Om, A, \gamma }<0$. We {have}
\begin{equation}\label{ee10.02}1_\Om(x) \le \liminf_{n \rightarrow +\infty}  1_{\Om_n}(x) \mbox{  a.e. $x \in D$}, \end{equation}
and hence
$$1_{\Om_n \cap A} \rightarrow 1_{A} \mbox{ in } L^1(\R^m).$$

Following \cite[Lemma 4.3.15]{BB05}, there exists larger sets $\tilde \Om_n \supset \Om_n$, $|\tilde \Om_n|\le 2$, such that for a subsequence (still denoted with the same index)
$$\lim_{n \rightarrow +\infty}  v_{ \tilde \Om_n, A\cap \Om_n, \gamma }(x) = v_{\Om, A, \gamma }(x), \mbox{ for a.e. } x\in \R^m.$$
Since ${\rm essinf} \; v_{\Om, A, \gamma }<0$, we get for $n$ large enough that ${\rm essinf} \; v_{\tilde \Om_n, A\cap \Om_n \, \gamma }<0$ for $n$ large enough.   Lemma \ref{lem:ext} (which also holds in the class of quasi-open sets) implies that ${\rm essinf} \; v_{ \Om_n, A\cap \Om_n, \gamma }<0$, since the right-hand side equals to $\gamma,\, \gamma>0$ on $\tilde \Om_n \setminus \Om_n$.
Consequently,
$ \mathfrak{C}_{-}(\Omega_n,\gamma)\le |\Om_n\cap A|$. Passing to the limit,
$$\limsup _{n \rightarrow +\infty} \mathfrak{C}_{-}(\Omega_n,\gamma)\le |A|,$$
 which  implies the assertion.

 Let us prove now that the shape optimisation problem \eqref{ee11.aq} has a solution.
In order to prove this result, it is enough to consider a maximising sequence $(\Om_n)$ of  quasi-open, quasi-connected subsets of $\R^m$, with $|\Om_n|=1$. We first notice that the diameters of $\Om_n$ are uniformly bounded, so that up to a translation all of them are subsets of the same ball $B$. This is a consequence of Theorem \ref{the4.1} which by approximation holds as well on quasi-open, quasi-connected sets. Indeed, this is essentially a consequence of \eqref{eq4.1a} which passes to the limit by approximation.

Then, the existence result is immediate from the compact embedding of $H^1_0(B)\rightharpoonup L^2(B)$ and the observation above: there exists a subsequence such that $v_{\Om_n}$ converges strongly in $L^2(\R^m)$ and pointwise almost everywhere to some function $v$.
Taking $\Om^*:=\{v >0\}$, and using the upper semi-continuity result \eqref{ee10.01} together with the lower semicontinuity of the Lebesgue measures coming from \eqref{ee10.02}, we conclude that $\Om^*$ is optimal.

\section{Proof of Theorem \ref{the6}, and further remarks}\label{sec6}

{\it Proof of Theorem \textup{\ref{the6}}.}
For a measurable set $A \subset  \Om$, we denote
$$m(A):={\rm essinf} \; v_{\Om, A, \gamma }.$$
Note that the smoothness of $\partial\Om$ implies that $v_{\Om, A, \gamma }  \in C^{1, \alpha} (\ov \Om)$.

Firstly we extend the shape functional $m$ on the closure of the convex hull of
$$\{ \gamma 1_{\Om\sm A}-(1-\gamma) 1_A : A \subset \Om, |A|=c\}.$$
 Denote by
$${\mathcal F}:= \{ f \in L^\infty (\Om) : -(1-\gamma) \le f \le \gamma, \int_\Om f  =\gamma|\Om|-c\}.
$$
One naturally extends the functional $m$ to the set ${\mathcal F}$ by defining
$v_{\Om,f,\gamma}$ as the solution of $-\Delta v =f$ in $H^1_0(\Om)$. We shall prove in the sequel that the relaxation of the shape optimisation problem \eqref{e55} on the set ${\mathcal F}$ has a solution in ${\mathcal F}$. Precisely, we solve
\begin{equation}\label{e56}
\min \{ m(f) : f \in {\mathcal F}\}.
\end{equation}
Clearly, ${\mathcal F}$ is compact for the weak-$\star$ $L^\infty$-topology, so that we can assume that $(f_n)_n$ is a   minimising  sequence which converges in weak-$\star$ $L^\infty$ to $f$. We know, by the Calderon-Zygmund inequality, that $\big(v_{\Om, f_n ,\gamma}\big)_n$ are uniformly bounded in $W^{2,p}(\Om)$, for every $p <\infty$. In particular, for $p$ large enough, this  implies that $v_{\Om, f_n, \gamma }$ converges uniformly to $v_{\Om, f, \gamma }$. Consequently, this implies that $m(f_n)$ converges to $m(f)$ so that $f$ is a solution to the optimisation problem \eqref{e56}.

  Secondly we prove that there exists some set $A$ such that $f= \gamma 1_{\Om\sm A}-(1-\gamma)1_A$. To prove this we exploit both the concavity property of the map $f \mapsto m(f)$, and the structure of the partial differential equation.  Assume for contradiction that the set
$$A_\vps:= \{ x\in \Om, -(1-\gamma)+\vps \le f(x) \le \gamma-\vps\}$$
has non-zero measure, for some $\vps >0$. Let $A_1, A_2\subset A_\vps$ be two disjoint sets, such that $|A_1|=|A_2|$. We consider the functions $f_1= f+ t1_{A_1}-t1_{A_2},$ and $f_2= f- t1_{A_1}+t1_{A_2}$, for $t \in (-\vps, \vps)$. Then, $f_1, f_2 \in {\mathcal F}$, and by linearity we have
$$v_{\Om, f, \gamma}= \frac 12  v_{\Om, f_1, \gamma}+  \frac 12 v_{\Om, f_2, \gamma}.$$
Consequently,
$$m(f) \ge \frac 12  m(f_1)+ \frac 12 m(f_2),$$
with strict inequality if the point  $x^*$ where $v_{\Om, f, \gamma}$ is minimised also minimises   $v_{\Om, f_1, \gamma}$ and $ v_{\Om, f_2, \gamma}$. Moreover, we have $ v_{\Om, f, \gamma}(x^*)=  v_{\Om, f_1, \gamma}(x^*)= v_{\Om, f_2, \gamma}(x^*)$. We distinguish between two situations: $v_{\Om, f, \gamma}(x^*)=0,$ and $v_{\Om, f, \gamma}(x^*)<0$. If we are in the first situation, then $x^*$ could belong to $\partial \Om$. In this case, for all admissible sets $A$ we have $v_{\Om,A, \gamma} \ge 0$, the minimal value, which is $0$ being attained on $\partial \Om$. In this case, every admissible set $A$ is a solution to the shape optimisation problem.

If we are in the second situation, then necessarily $x^* \in \Om$. By linearity, from  $ v_{\Om, f, \gamma}(x^*)=  v_{\Om, f_1, \gamma}(x^*)$
we get
$$v_{\Om, 1_{A_1},0}(x^*)=  v_{\Om, 1_{A_2}, 0}(x^*).$$
In particular, for every pair of points $x,y \in A_\vps\sm \{x^*\}$  with  density $1$ in $A_\vps$ we get
$$G_\Om(x^*, x)= G_\Om(x^*, y).$$
Since $G_\Om$ is harmonic on $\Om \sm \{x^*\}$, we get that $G_\Om$ is constant in  $\Om \sm \{x^*\}$, in contradiction with the fact  that  it is a fundamental solution.

Finally, this implies that $|A_\vps|=0$ for every $\vps>0$. Hence $f$ is a characteristic function.
\hspace*{\fill }$\square $

\begin{remark}{\rm
Clearly, the solution of the shape optimisation problem above is, in general, not unique. If the minimal value is $0$, then any admissible set $A$ is a solution. If the minimal value is strictly negative,  then there are geometries with non uniqueness. For example if $\Om$ is the union of two disjoint balls with the same radius, then $A$ is a subset of one of the two balls.

}
\end{remark}

\begin{remark}{\rm
 Assume $\Om=B_R$,  and $|B_R|\ge c\ge \gamma |B_R|$. The solution to the shape optimisation problem \eqref{e55} is given by the (concentric) ball $B_{r_c}$, of mass $c$,  $c=|B_{r_c}|$. Indeed, there are two possibilities. This follows directly from Talenti's theorem applied to $ -v_{{B_R}, A, \gamma}$  in case $A \subset B_R$ has measure $c$ and $v_{B_R, A, \gamma} \le 0$.

Assume now that $v_{B_R, A,\gamma}$ changes sign on $B_R$. We define the sets $\Om^+ =\{ v_{B_R, A,\gamma} >0\}$ and $\Om^- =\{ v_{B_R, A,\gamma} <0\}$. In view of Theorem \ref{the1}, we have  that $|A \cap \Om^+|\le \gamma |\Om^+|$ and $|A \cap \Om^-|\ge \gamma |\Om^-|$. We use Talenti's theorem on $\Om^-$, and get that the essential infimum of the function $v_{B_{R'}, B_{r'}, \gamma}$ is not larger than the infimum of $v_{B_R,A, \gamma}$, where $B_{R'},B_{r'}$ are the balls centred at the origin of measures $|\Om^-|$, $| \Om^- \cap A|$, respectively. We claim that $v_{B_R, B_{r_c},\gamma} \le v_{B_{R'},B_{r'}, \gamma}$. Indeed, making a suitable rescaling by a factor $t \ge1$ such that $|t(\Om^-\cap A)| = c \ge \gamma|B_R|$, the function $v_{tB_{R'}, tB_{r'}, \gamma}$ has an essential infimum lower  than  that of   $v_{B_{R'}, B_{r'},\gamma}$.   We finally notice that
$v_{B_R, B_{r_c}, \gamma}  \le v_{tB_{R'}, B_{r_c}, \gamma}$. Indeed, this is a consequence of the fact that $v_{tB_{R'}, B_{r_c}, \gamma}$ is equal to $\min \{-\delta, v_{B_R, B_{r_c}, \gamma} \} +\delta$, for a suitable $\delta >0$.

}
\end{remark}

\begin{remark}{\rm

Assume  $\Om=B_R$.  Let $0<c < |B_R|$ and denote by $B_{r_c}$ the ball with the same centre as $B_R$ and of volume $c$. For every  radial set $A$ of volume $c$ we have
 $$v_{B_R, B_{r_c}, \gamma }\le v_{B_R,A,\gamma}.$$
    Indeed, let us denote for simplicity $v= v_{B_R,A,\gamma}$ and $v_c= v_{B_R, B_{r_c}, \gamma}$. Using the fact  that both $v$ and $v_c$ are radial, {we get}
 \begin{align*}
 -r^{m-1} v'(r)& = \int_0^r  s^{m-1} (\gamma1_{B_R \setminus A} -(1-\gamma ) 1_A) ds\nonumber \\ &= \frac {1}{\omega_{m-1}} \int_{B_r} \big(\gamma 1_{B_R \setminus A} -(1-\gamma ) 1_A\big),
 \end{align*}
 \begin{align*}-r^{m-1} v_c'(r) &= \int_0^r s^{m-1} (\gamma 1_{B_R \setminus B_{r_c}} -(1-\gamma ) 1_{B_{r_c}}) ds\nonumber \\ &= \frac {1}{\omega_{m-1}} \int_{B_r} \big(\gamma 1_{B_R \setminus B_{r_c}} -(1-\gamma ) 1_{B_{r_c}}\big),
 \end{align*}
 where, for a radial set $E\sq B_R$, we define \textup{(}with abuse of notation\textup{)}, $1_E(r)$ being the value of $1_E$ on the sphere of radius $r$.

Since for all $r\in(0,R)$,
$$\int_{B_r} \big(\gamma 1_{B_R \setminus A} -(1-\gamma ) 1_A\big) \ge \int_{B_r} \big(\gamma 1_{B_R \setminus B_{r_c}} -(1-\gamma ) 1_{B_{r_c}}\big) ,$$
we get that for all $r\in(0,R)$
$$-r^{m-1} v'(r)\ge -r^{m-1} v_c'(r).$$
Hence
$$\int_r^1 v_c'(s) ds \ge \int_r^1 v'(s) ds,$$
and
$$-v_c(r)\ge -v(r).$$
This concludes the proof. Moreover, the infimum value of $v_c$  is attained either at $0$ or at $R$, as $v_c'$ is positive on some interval $(0,\alpha)$ and negative on $(\alpha, R)$.

For $\gamma =\frac 12$, we can compute the value of $c$ such that $v_c(0)=v_c(R)=0$. Indeed, in $\R^2$, the corresponding value $r_c$ is  the  solution of
 $$\frac{r^2}{2}-\frac 14-r^2 \ln r =0.$$
 An estimate of the solution is $r_c\approx 0.432067$.

}
\end{remark}
 \begin{remark}\label{rem01}{\rm

 Assume   $\Om=B_R$,   and $ \mathfrak{C}_{-}(B_R,\gamma)<c< \gamma |B_R|$.
 The solution of the shape optimisation  problem  is non-trivial in this case. While we do not know the general solution, we can observe  a  symmetry breaking phenomenon: the solution is not radially symmetric for small values of $c$.

 Let   $\gamma=\frac 12, r_c=0.432$ just below the value computed in the previous remark. Then, for every radial set $A$, the essential infimum of $ v_{B_R,A,\frac 12}$   is equal   to $0$.
 Meanwhile,  there exists a non-radial set $A$ which gives a lower essential infimum. This fact is observed numerically, if for instance the set $A$ is a disc, centred at $(0.52,0)$ of radius $r_c=0.432$. Of course, the fact that in this case the essential infimum is strictly negative can be directly deduced from  estimates of the Poisson formula. In Figures 1 and 2 below, we display the  (rescaled) numerical solutions computed with MATLAB.
 \begin{figure}[ht]
\includegraphics[width=0.3\textwidth]{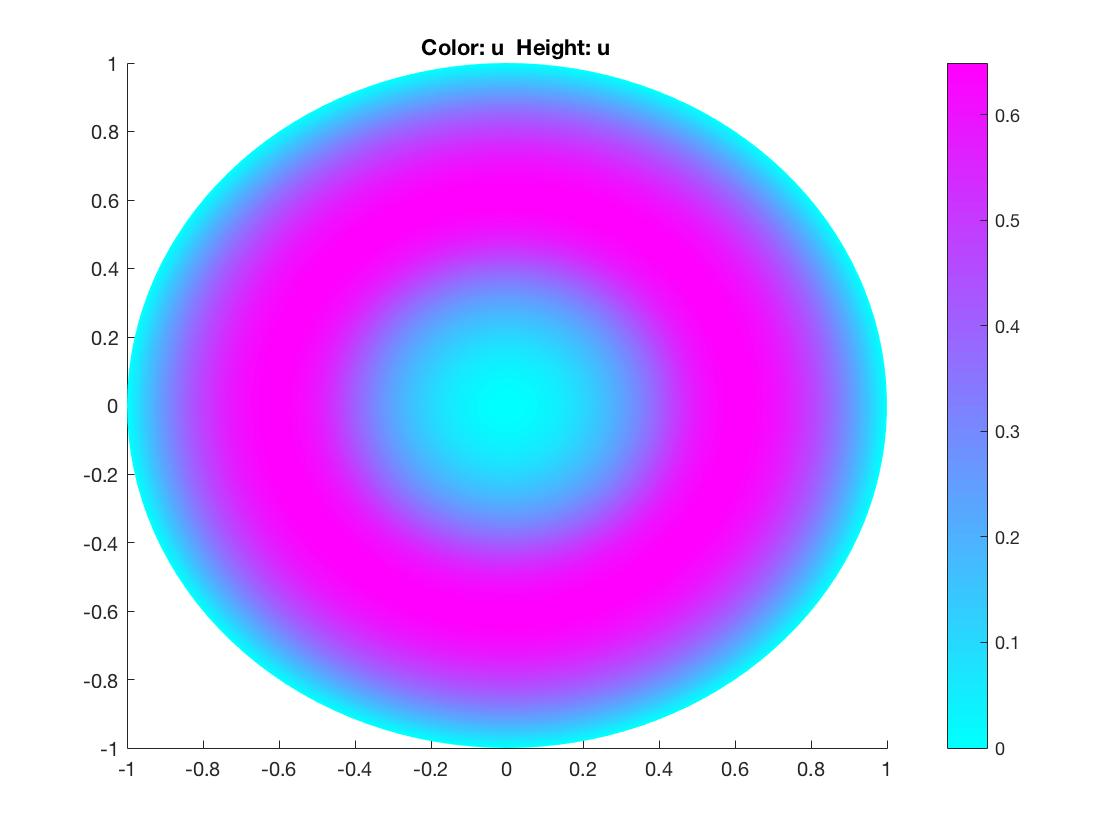}
\includegraphics[width=0.3\textwidth]{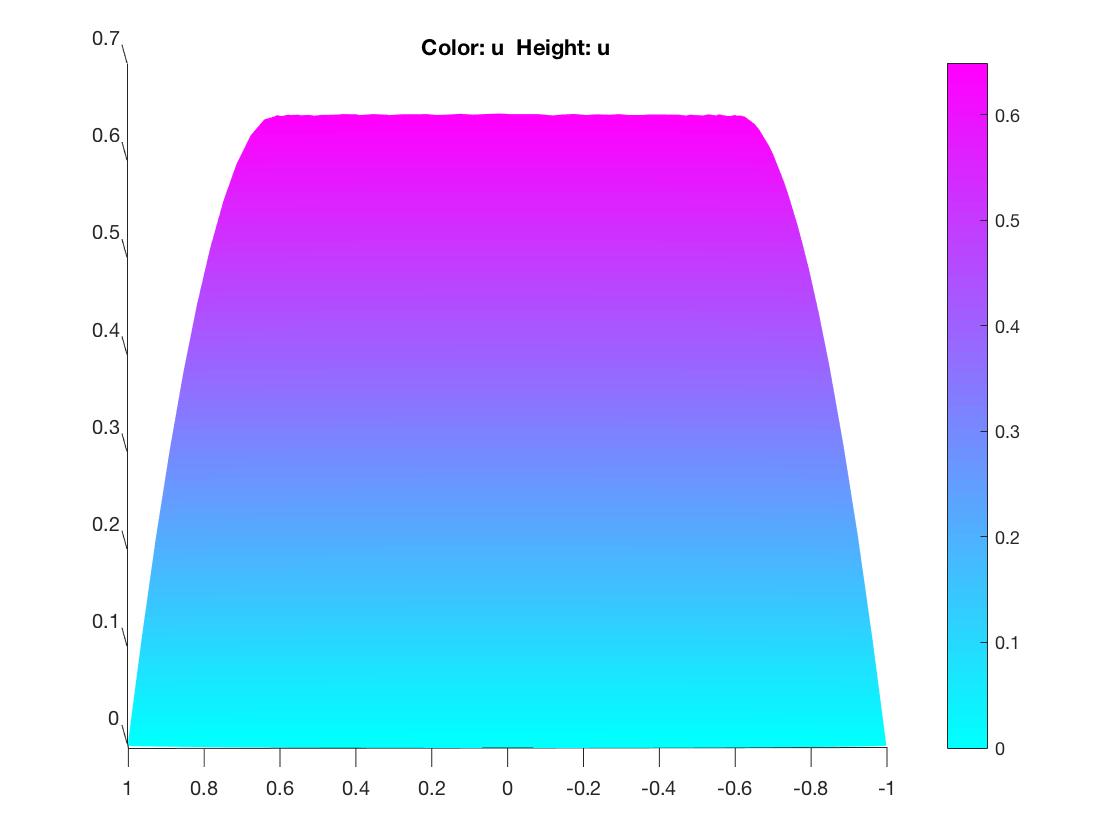}
\includegraphics[width=0.3\textwidth]{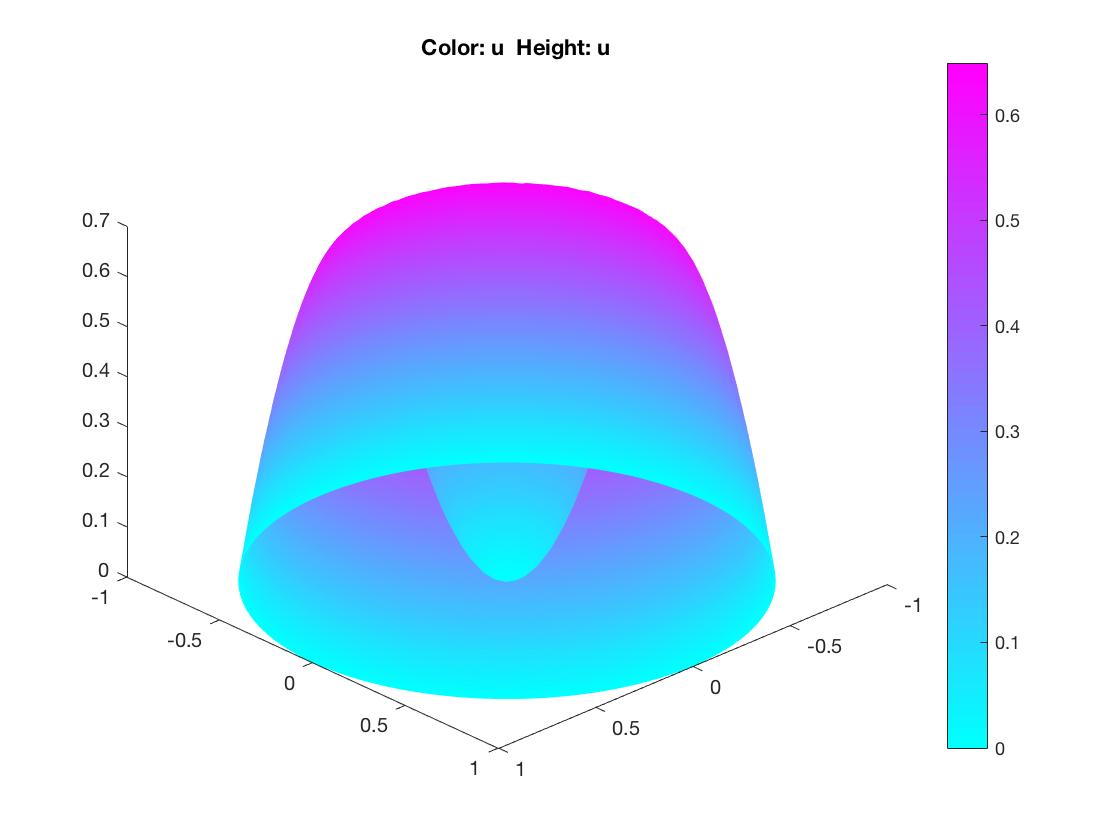}
\caption{Negative mass displayed in the disc centred at $0$ of radius $r=0.432$: the essential infimum is $0$.}
\label{bbp02}
\end{figure}

 \begin{figure}[ht]
\begin{center}
\includegraphics[width=0.3\textwidth]{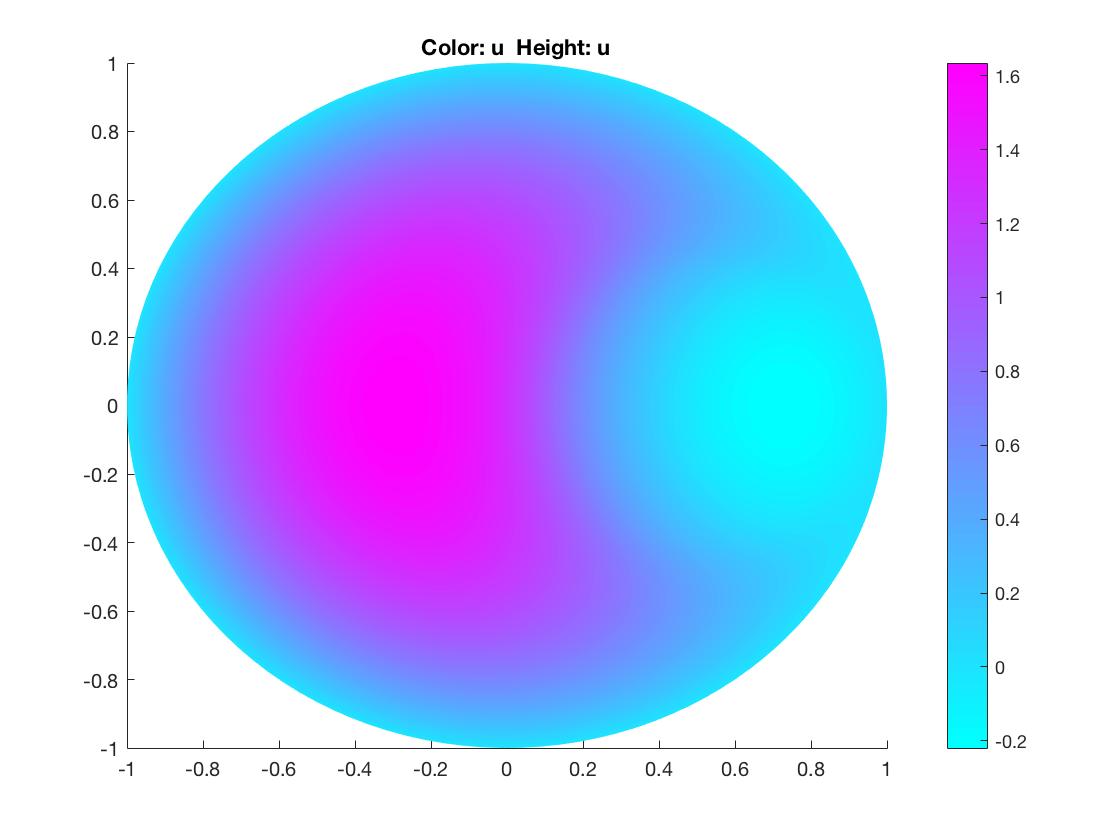}
\includegraphics[width=0.3\textwidth]{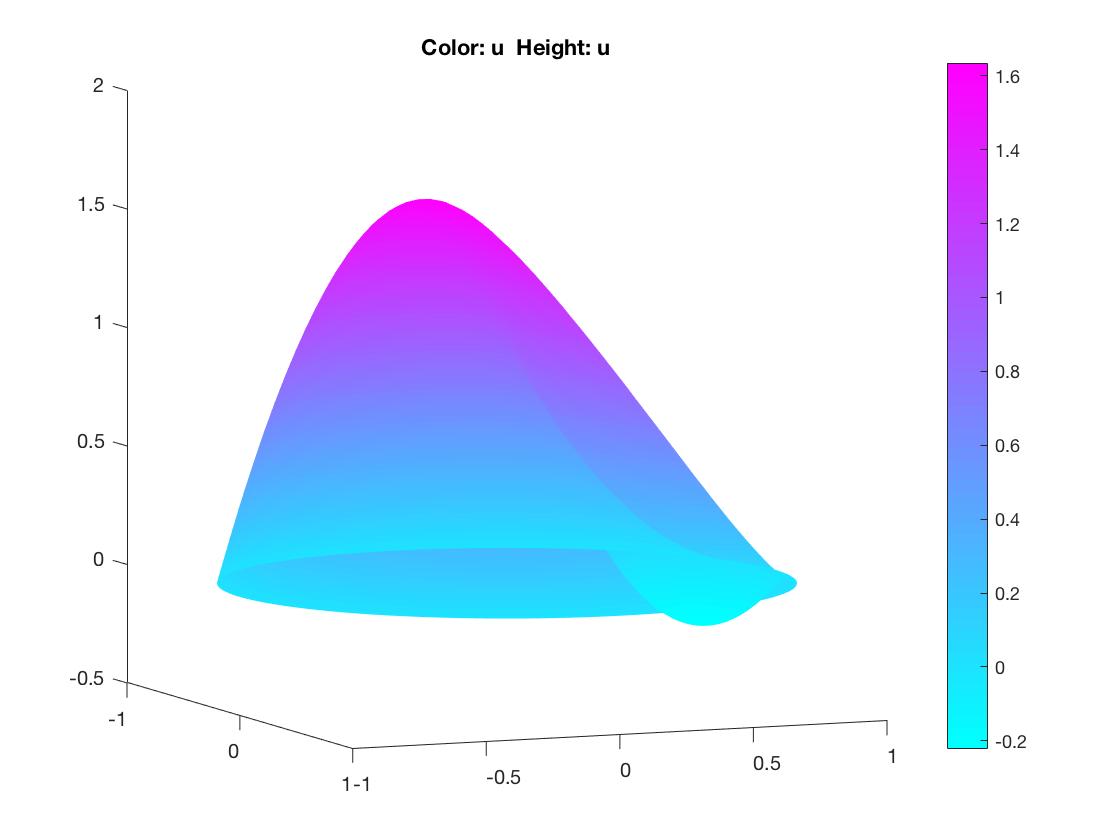}
\caption{Negative mass placed on the disc centred at $(0.52,0)$ of radius $r=0.432$: the essential infimum is negative.}
\end{center}
\label{bbp02.9}
\end{figure}

If $c$ is less than the critical value, the infimum is equal  to $0$, and is attained for an infinite number of solutions to the shape optimisation problem.

}
\end{remark}

\begin{remark}{\rm
 The solutions of the following shape optimisation problems
\begin{equation*}
\max \Big\{   \int_\Om v_{\Om, A, \gamma }   : A \subset \Om, |A|=c\Big\},
\end{equation*}
and
\begin{equation*}
\min \Big\{   \int_\Om v_{\Om, A, \gamma}   : A \subset \Om, |A|=c\Big\},
\end{equation*}
are immediate. Indeed, we observe that
$$ \int_\Om v_{\Om, A, \gamma } = \gamma  \int_{\Om} v_\Om  - \int _A v_\Om .$$
Hence, the position of the set $A$ is a suitable lower/upper level set of $v_\Om$.
}
\end{remark}

\begin{remark}{\rm  If $|A|\le\mathfrak{C}_{-}(B_R,\gamma)$ then $v_{\Omega,A,\gamma}\ge 0,$ and $\int_\Om v_{\Om,A,\gamma}\ge 0$. Below we improve the bound $|A|\le\big(\frac{\gamma}{4m}\big)^m\omega_mR^m$ in \eqref{e00} for $\int_\Om v_{\Om,A,\gamma}\ge 0$ to hold.

Let $\Om\subset\R^m,m\ge 2,$ be an open set with finite measure and a $C^2,$ $R$-smooth boundary. Let $\gamma>0$ and let $v_{\Om,A,\gamma}$ be the solution of \eqref{e2}. If either $m\ge 3$, and
\begin{equation}\label{ef9}
|A|\le\frac{m}{6(m-1)^2}\gamma\omega_mR^m,
\end{equation}
or $m=2$, and
\begin{equation}\label{ef10}
|A|\le \frac{10+7\sqrt 7}{324}\gamma\pi R^2,
\end{equation}
then $\int_{\Om}v_{\Om,A,\gamma}\ge 0.$

\begin{proof} First consider the case $m\ge 3$. By Lemma \ref{lem1} and the coarea formula, we have for $a>0$ that,
\begin{align}\label{ef1}
\int_{\Om}v_{\Om}&\ge \int_{\{x\in \Om: |x-\bar x|<a\}} dx\,\frac{|x-\bar x|R}{2m}\nonumber \\ &
\ge \int_{[0,a]}d\theta \frac{R\theta}{2m}\mathcal{H}^{m-1}(\partial \Om_{\theta}),
\end{align}
where $\mathcal{H}^{m-1}(\partial \Om_{\theta})$ denotes the $(m-1)$-dimensional Hausdorff measure of the parallel set $\{x\in \Om: |x-\bar x|=\theta.\}$
It was shown in Lemma 5 in \cite{vdB} that for an open, bounded set $\Om$ with a $C^2$, $R$-smooth boundary,
\begin{equation}\label{ef2}
\mathcal{H}^{m-1}(\partial \Om_{\theta})\ge \bigg(1-\frac{(m-1)\theta}{R}\bigg)\mathcal{H}^{m-1}(\partial\Om),\, \theta\ge 0.
\end{equation}
By \eqref{ef1} and \eqref{ef2} we  {obtain}
\begin{equation*}
\int_{\Om}v_{\Om}\ge \frac{R}{2m}\bigg(\frac{a^2}{2}-\frac{(m-1)a^3}{3R}\bigg)\mathcal{H}^{m-1}(\partial\Om).
\end{equation*}
Optimising over $a$ yields,
\begin{equation}\label{ef4}
\int_{\Om}v_{\Om}\ge \frac{R^3}{12m(m-1)^2}\mathcal{H}^{m-1}(\partial\Om).
\end{equation}
By the isoperimetric inequality we {have}
\begin{equation}\label{ef5}
\mathcal{H}^{m-1}(\partial\Om)\ge m\omega_m^{1/m}|\Om|^{(m-1)/m}.
\end{equation}
Since $\Om$ contains a ball of radius $R$, $|\Om|\ge \omega_mR^m$. Hence, by \eqref{ef4} and \eqref{ef5},
\begin{equation*}
\mathcal{H}^{m-1}(\partial\Om)\ge m\omega_m^{(m-2)/m}R^{m-3}|\Om|^{2/m}.
\end{equation*}
This, together with \eqref{ef4} {yields}
\begin{equation}\label{ef7}
\int_{\Om}v_{\Om}\ge \frac{\omega_m^{(m-2)/m}}{12(m-1)^2}R^m|\Om|^{2/m}.
\end{equation}
By Talenti's theorem,
\begin{align}\label{ef8}
\int_A v_{\Om}&\le \int_{A^*} v_{\Om^*}\nonumber \\ &
=2^{-1}\omega_m\int_{[0,r_A]} dr\big(R_{\Om}^2-r^2\big)r^{m-1}\nonumber \\ &\le (2m)^{-1}\omega_mR_{\Om}^2r_A^m\nonumber \\ &=(2m)^{-1}\omega_m^{-2/m}|\Om|^{2/m}|A|.
\end{align}
By \eqref{e2}, \eqref{ef7}, and  \eqref{ef8} we have
\begin{align}\label{ee1}
\int_{\Om}v_{\Om,A,\gamma}&=\gamma \int_{\Om}v_{\Om}-\int_Av_{\Om}\nonumber \\ &
\ge \frac{\omega_m^{(m-2)/m}}{12(m-1)^2}\gamma R^m|\Om|^{2/m}-(2m)^{-1}\omega_m^{-2/m}|\Om|^{2/m}|A|.
\end{align}
This implies that $\int_{\Om} v_{\Om,A,\gamma}\ge 0$ for all measurable $A\subset \Om$ satisfying \eqref{ef9}.

Next consider the planar case. By Lemma \ref{lem1}, we have for any  $\alpha\in (0,1)${,}
\begin{equation}\label{ee2}
\int_{\{x\in \Om: |x-\bar x|\ge\alpha R\}}v_{\Om}\ge \frac{\alpha R^2}{4}|\{x\in \Om: |x-\bar x|\ge\alpha R\}|.
\end{equation}
By the coarea formula, Lemma \ref{lem1}, and \eqref{ef2}, we {find}
\begin{align}\label{eee2}
\int_{\{x\in \Om: |x-\bar x|<\alpha R\}}v_{\Om}&\ge \frac{R^3}{4}\Big(\frac{\alpha^2}{2}-\frac{\alpha^3}{3}\Big)\mathcal{H}^{1}(\partial\Om).
\end{align}
By Lemma 5 in \cite{vdB},
\begin{equation}\label{ee8}
\mathcal{H}^{  1 }(\partial \Om_{\theta})\le   \frac{R}{R-\theta} \mathcal{H}^{1}(\partial\Om),\, 0\le\theta<R.
\end{equation}
By the coarea formula, and \eqref{ee8}, we {find}
\begin{align}\label{ee4}
|\{x\in \Om: |x-\bar x|\le\alpha R\}|&  \le  \mathcal{H}^{1}(\partial\Om)\int_{[0,\alpha R]}  d\theta\,\frac{R}{R-\theta}  &\nonumber \\ &\le
\alpha(1-\alpha)^{-1}\mathcal{H}^{1}(\partial\Om)R.
\end{align}
Putting \eqref{ee2}, \eqref{eee2}, and \eqref{ee4} together gives 
\begin{align*}
\int_{\Om}v_{\Om}&\ge \frac{\alpha R^2}{4}|\{x\in \Om: |x-\bar x|\ge\alpha R\}|\nonumber \\ &\hspace{4mm}+\frac{1}{24}\alpha(1-\alpha)(3-2\alpha)R^2|\{x\in \Om: |x-\bar x|\le\alpha R\}|
\nonumber \\ & \ge \min \Big\{\frac{\alpha}{4},\frac{1}{24}\alpha(1-\alpha)(3-2\alpha)\Big\}R^2|\Om|\nonumber \\ &
=\frac{1}{24}\alpha(1-\alpha)(3-2\alpha)R^2|\Om|.
\end{align*}
We choose $\alpha=\frac16(5-\sqrt 7)$  so  as to maximise the above right-hand side, and {obtain}
\begin{equation}\label{ee6}
\int_{\Om}v_{\Om}\ge \frac{10+7\sqrt 7}{1296}R^2|\Om|.
\end{equation}
Formula \eqref{ef8} for $m=2$, \eqref{ee6}, and the first equality in \eqref{ee1} yield,
\begin{equation*}
\int_{\Om}v_{\Om,A,\gamma}\ge \Big(\frac{10+7\sqrt 7}{1296} \gamma R^2-\frac{1}{4\pi}|A|\Big)|\Om|.
\end{equation*}
The above right-hand side is non-negative for all measurable $A\subset \Om$ satisfying \eqref{ef10}.
\end{proof}

}\end{remark}

\bigskip
\bigskip

\noindent
{\bf Acknowledgments.} Both authors were supported by the London Mathematical Society, Scheme 4 grant 41719. MvdB was supported by The Leverhulme Trust through Emeritus Fellowship EM-2018-011-9. DB was supported by the ``Geometry and Spectral Optimization'' research programme
LabEx PERSYVAL-Lab GeoSpec (ANR-11-LABX-0025-01), ANR Comedic (ANR-15-CE40-0006), ANR SHAPO (ANR-18-CE40-0013) and Institut Universitaire de France. The authors are grateful to Beniamin Bogosel for discussions and independent numerical computations related to the assertion of Remark \ref{rem01}.


\begin{thebibliography} {99}



\bibitem{ALT89} A. Alvino, G. Trombetti, P.-L. Lions,
On optimisation problems with prescribed rearrangements.
Nonlinear Anal. 13 (1989), 185--220.


\bibitem{vdB}M. van den Berg, On the asymptotics of the heat equation and bounds on traces associated with the Dirichlet Laplacian. J. Funct. Anal. 71 (1987), 279--293.

\bibitem{vdBC}M. van den Berg, T. Carroll, Hardy inequality and $L^p$ estimates for the torsion function. Bull. Lond. Math. Soc. 41 (2009), 980--986.


\bibitem{MvdBB}M. van den Berg, D. Bucur, On the torsion function with Robin or Dirichlet boundary conditions. J. Funct. Anal. 266 (2014), 1647--1666.


\bibitem{Br14} L. Brasco, On torsional rigidity and principal frequencies: an invitation to the Kohler-Jobin rearrangement technique.
ESAIM Control Optim. Calc. Var. 20 (2014), 315--338.




\bibitem{BB05} D. Bucur, G. Buttazzo,  Variational methods in shape optimization problems. Progress in Nonlinear Differential Equations and their Applications, 65. Birkh\"auser Boston, Inc., Boston, MA (2005).

\bibitem{BBV}D. Bucur, G. Buttazzo, B. Velichkov, Spectral optimisation problems for potentials and measures. SIAM J. Math. Anal. 46 (2014), 2956--2986.

\bibitem{BT11}
G. R. Burton, J. F. Toland,
Surface waves on steady perfect-fluid flows with vorticity.
Comm. Pure Appl. Math. 64 (2011), 975--1007.

\bibitem{CL96}
S. Cox, R. Lipton,
Extremal eigenvalue problems for two-phase conductors.
Arch. Rational Mech. Anal. 136 (1996), 101--117.

\bibitem{EBD3}E. B. Davies, Heat kernels and spectral theory. Cambridge University Press, Cambridge (1989).

\bibitem{FH}S. Fournais, B. Helffer, Inequalities for the lowest magnetic Neumann eigenvalue. {Lett. Math. Phys. 109 (2019), 1683--1700.}

\bibitem{FU72} B. Fuglede, Finely harmonic functions, Lecture Notes in Mathematics 289 Springer, Berlin,
Heidelberg, New York (1972).

\bibitem{GS}T. Giorgi, R. G. Smits, Principal eigenvalue estimates
via the supremum of torsion. Indiana Univ. Math. J. 59 (2010),
987--1011.

\bibitem{GB}A. Grigor'yan, Analytic and geometric backgroud of recurrence and non-explosion of the Brownian motion on
Riemannian manifolds. Bull. Amer. Math. Soc. (N.S.) 36 (1999), 135--249.

\bibitem{GB1}A. Grigor'yan, Heat kernel and Analysis on
manifolds. AMS-IP Studies in Advanced Mathematics, \textbf{47},
American Mathematical Society, Providence, RI; International
Press, Boston, MA (2009).

\bibitem{HKM93}J. Heinonen, T. Kilpel\"ainen, O. Martio, Nonlinear potential theory of degenerate elliptic equations. Oxford Mathematical Monographs. Oxford Science Publications. The Clarendon Press, Oxford University Press, New York (1993).

\bibitem{HKS} B. Helffer, H. Kovarik, M. P. Sundqvist On the semi-classical analysis of the groundstate energy of the Dirichlet Pauli operator III: Magnetic fields that change sign. {Lett. Math. Phys. 109 (2019), 1533--1558.}

\bibitem{LLNP16} J. Lamboley, A. Laurain, G. Nadin, Y. Privat,
Properties of optimisers of the principal eigenvalue with indefinite weight and Robin conditions.
Calc. Var. Partial Differential Equations 55 (2016), no. 6, Art. 144, 37 pp.

\bibitem{Li83} E. H. Lieb, On the lowest eigenvalue of the Laplacian for the intersection of two domains.
Invent. Math. 74 (1983), 441--448.


\bibitem{ke06}S. Kesavan,
Symmetrization \& applications.
Series in Analysis, 3. World Scientific Publishing Co. Pte. Ltd., Hackensack, NJ (2006).

\bibitem{McG}I. McGillivray, An unstable two-phase membrane problem and maximum flux exchange flow. Appl. Math. Optim. 75 (2017), 365--401.

\bibitem{TG}S. Timoshenko, J. N. Goodier, Theory of elasticity, McGraw-Hill Book Company, Inc.  New York (1951).

\bibitem{Tr88} N. Trudinger,
Comparison principles and pointwise estimates for viscosity solutions of nonlinear elliptic equations.
Rev. Mat. Iberoamericana 4 (1988), 453--468.

\bibitem{HV}H. Vogt, $L_{\infty}$ estimates for the torsion function and $L_{\infty}$ growth of semigroups satisfying Gaussian bounds. {Potential Analysis 51 (2019), 37--47.}

\end{thebibliography}
\end{document}